\definecolor{myblue}{rgb}{0,0,0.5}
\definecolor{mygreen}{rgb}{0,0.5,0}
\definecolor{myred}{rgb}{0.5,0,0}
\newcommand{\D}{\mathbb{D}iv \;}
\newcommand{\nn}{\nonumber}
\newcommand{\RNum}[1]{\uppercase\expandafter{\romannumeral #1\relax}}
\def \[{\begin{equation}}
\def \]{\end{equation}}
\newtheorem{theorem}{Theorem}[section]
\newtheorem{lemma}{Lemma}[section]
\newtheorem{remark}{Remark}
\newtheorem{proposition}{Proposition}[section]
\gdef\cdefinition{定义\,}
\gdef\clemma{引理\,}
\gdef\ctheorem{定理\,}
\begin{document}
\begin{CJK*}{GBK}{song}

\begin{center}

{\LARGE  \bf  A generalized primal-dual algorithm with improved convergence condition for saddle point problems}\\

\bigskip
\medskip

 {\bf Bingsheng He}\footnote{\parbox[t]{16cm}{
 Department of Mathematics,  Nanjing University, China.
  This author was supported by the NSFC Grant 11871029. Email: hebma@nju.edu.cn}}
 \quad
  {\bf Feng Ma}\footnote{\parbox[t]{16.0cm}{
High-Tech Institute of Xi'an, Xi'an, 710025, Shaanxi, China. This author was supported by the NSFC Grant
 12171481. Email:
 mafengnju@gmail.com}}
   \quad
 {\bf Shengjie Xu}\footnote{\parbox[t]{16cm}{
Department of Mathematics,   Southern  University of Science and Technology, Shenzhen, China. This author was supported by the Guangdong Basic and Applied Basic Research Foundation of China under grant No. 2018A0303130123.  Email: xsjnsu@163.com
  }}
  \quad
 {\bf Xiaoming Yuan}\footnote{\parbox[t]{16cm}{
 Department of Mathematics, The University of Hong Kong, Hong Kong. This author was supported  by the General Research Fund from Hong Kong Research Grant Council: 12302318. Email:  xmyuan@hku.hk
  }}

\bigskip


    October 19, 2021

\end{center}

\bigskip

{\small

\parbox{0.95\hsize}{

\hrule

\medskip

{\bf Abstract.} We generalize the well-known primal-dual algorithm proposed by Chambolle and Pock for saddle point problems, and improve the condition for ensuring its convergence. The improved convergence-guaranteeing condition is effective for the generic setting, and it is shown to be optimal. It also allows us to discern larger step sizes for the resulting subproblems, and thus provides a simple and universal way to improve numerical performance of the original primal-dual algorithm. In addition, we present a structure-exploring heuristic to further relax the convergence-guaranteeing condition for some specific saddle point problems, which could yield much larger step sizes and hence significantly better performance. Effectiveness of this heuristic is numerically illustrated by the classic assignment problem.

\medskip

\noindent {\bf Keywords}: saddle point problem, convex programming, image processing, primal-dual algorithm, convergence condition, assignment problem

 \medskip

  \hrule

  }}

\section{Introduction}

A fundamental mathematical model is the saddle point problem:
\begin{equation}\label{Min-Max}
  \min_{x\in \cal{X}} \max_{y\in \cal{Y}} {\cal L}(x,y) := f(x) -  y^TAx - g(y),
\end{equation}
where ${\cal X} \subseteq \Re^n$ and ${\cal Y} \subseteq \Re^m$ are closed convex sets, $f:{\cal X}\to \Re$ and $g:{\cal Y}\to \Re$ are proper convex but not necessarily smooth functions, and $A\in\Re^{m\times n}$ is a given matrix. The saddle point problem \eqref{Min-Max} includes many special cases such as the optimality condition of the canonical convex programming problem with linear constraints (see Section \ref{sec-connection}), scientific computing problems (see \cite{AHU}), and particularly a number of variational models arising in image reconstruction problems
 (see \cite{CHPock,CP-Acta,EZC,ZhuChan}).

To solve the saddle point problem (\ref{Min-Max}), the primal-dual algorithm proposed by Chambolle and Pock in \cite{CHPock} is influential. Its iterative scheme reads as
 \begin{subequations} \label{C-P-a}
\begin{numcases}{}
\label{C-P-x-0}  x^{k+1} =\arg\min \big\{ {\cal L}(x,y^k) + \frac{r}{2}\|x-x^k\|^2 \;|\; x\in {\cal X} \big\},\\[0.1cm]
\label{C-P-x-0-bar} \bar{x}^{k+1}= x^{k+1} +\alpha(x^{k+1}-x^k),\\[0.1cm]
\label{C-P-y-0}
     y^{k+1} = \arg\max \big\{ {\cal L}(\bar{x}^{k+1}, y) - \frac{s}{2}\|y-y^k\|^2  \;|\; y\in {\cal Y} \big\},
\end{numcases}
\end{subequations}
where $\alpha \in [0,1]$ is an extrapolation parameter, $r>0$ and $s>0$ are the regularization parameters for the proximal regularization terms in the decomposed subproblems (\ref{C-P-x-0}) and (\ref{C-P-y-0}), respectively. We refer to, e.g., \cite{CP-MP,Condat,EZC,HeMaYuan2017,HeYouYuan,MP2016,PockC,Zhang}, for discussions and generalizations on the algorithm (\ref{C-P-a}) from different perspectives. In particular, the algorithm (\ref{C-P-a}) is reduced to the Arrow-Hurwicz method in \cite{AHU} when $\alpha=0$, and it has been reemphasized as the primal-dual hybrid gradient method (PDHG) since the work \cite{ZhuChan}. As studied in various literatures such as \cite{BR2012,CHPock,EZC,HeYouYuan}, convergence of the PDHG can be established only when some more restrictive conditions are additionally assumed, such as strong convexity of the functions or some demanding requirements on the step sizes. Most recently, it was shown in \cite{HXY-AH} that, for the algorithm (\ref{C-P-a}) with $\alpha=0$, convergence is not guaranteed in the generic setting without these additional conditions. Moreover, to the best of our knowledge, convergence of the algorithm (\ref{C-P-a}) with $\alpha\in (0,1)$ is still unknown. Then, the most popular choice is the remaining case with $\alpha=1$ for the primal-dual algorithm (\ref{C-P-a}), and the resulting scheme is
\begin{subequations} \label{C-P}
\begin{numcases}{\hbox{(CP-PPA)}}
\label{C-P-0x}  x^{k+1} =\arg\min \big\{ {\cal L}(x,y^k) + \frac{r}{2}\|x-x^k\|^2  \;|\;  x\in {\cal X} \big\},\\[0.1cm]
\label{C-P-0-bar} \bar{x}^{k+1}= 2x^{k+1} -x^k,\\[0.1cm]
\label{C-P-0y}  y^{k+1} = \arg\max \big\{ {\cal L}(\bar{x}^{k+1}, y) - \frac{s}{2}\|y-y^k\|^2   \;|\;   y\in {\cal Y} \big\}.
\end{numcases}
\end{subequations}
In \cite{CHPock}, the parameters $r>0$ and $s>0$ are required to satisfy the condition
\begin{equation}\label{Assum-CP}
 r\cdot s >\rho(A^T\!A)
\end{equation}
to theoretically ensure the convergence of \eqref{C-P}, where $\rho(\cdot)$ denotes the spectrum of a matrix. For (\ref{C-P}), it was shown by He and Yuan in \cite{HeYuanSIAMIS} that it can be interpreted as an application of the classic proximal point algorithm (PPA) proposed in \cite{Martinet70,Rock76B}, and the condition (\ref{Assum-CP}) essentially ensures the positive definiteness of the underlying matrix which is used to define the corresponding proximal term. We thus call the algorithm \eqref{C-P} ``CP-PPA" for short. The CP-PPA (\ref{C-P}) is more preferable than the general primal-dual algorithm (\ref{C-P-a}) with other choices of $\alpha$, because of its better theoretical properties such as the guaranteed convergence and the mentioned PPA interpretation, as well as its more attractive numerical performance widely shown in the literatures such as \cite{CC2010,CHPock,CP-Acta,HeYuanSIAMIS}.

%

Numerical performance of the CP-PPA (\ref{C-P}) certainly depends on the choices of $r$ and $s$. Indeed, $r$ and $s$ determine the step sizes for solving the subproblems (\ref{C-P-0x}) and (\ref{C-P-0y}), respectively. To discern more appropriate choices of $r$ and $s$ for a specific problem, it is necessary to first investigate their theoretical role in ensuring convergence of the CP-PPA (\ref{C-P}) for the generic setting of (\ref{Min-Max}). An apparent effort is to consider relaxing the condition (\ref{Assum-CP}) and hence gain the possibility of enlarging the step sizes, while convergence of the CP-PPA (\ref{C-P}) can be still kept theoretically. Our main purpose is to propose a generalized version of the CP-PPA (\ref{C-P}) and improve the convergence condition (\ref{Assum-CP}) in the generic setting of (\ref{Min-Max}). More specifically, we generalize the CP-PPA (\ref{C-P}) as

\begin{subequations} \label{H-Y}
\begin{numcases}{\hbox{(Generalized CP-PPA)}\quad}
\label{H-Y-x}  x^{k+1} =\arg\min \big\{ {\cal L}(x,y^k) + \frac{r}{2}\|x-x^k\|^2   \;|\;  x\in {\cal X} \big\}, \\[0.1cm]
\label{H-Y-x-2} \bar{x}^{k+1}    = x^{k+1}   + \alpha(x^{k+1}-x^k),  \\[0.1cm]
\label{H-Y-y} \bar{y}^{k+1} = \arg\max \big\{ {\cal L}(\bar{x}^{k+1},y) - \frac{s}{2}\|y-y^k\|^2  \;|\;   y\in {\cal Y} \big\},\\
\label{H-Y-y-2} y^{k+1}=\bar{y}^{k+1}-(1-\alpha)\frac{1}{s}A({x}^{k+1}-x^k),
\end{numcases}
\end{subequations}
in which $\alpha\in [0,1]$, and then improve the condition (\ref{Assum-CP}) as
\begin{equation} \label{Assum}
  r\cdot s > (1-\alpha +\alpha^2)  \rho(A^T\!A).
\end{equation}
It is trivial to see that the generalized CP-PPA (\ref{H-Y}) with (\ref{Assum}) includes the CP-PPA (\ref{C-P}) with (\ref{Assum-CP}) as the special case of $\alpha=1$. It is also clear that $(1-\alpha +\alpha^2)\leq1$ for $\alpha\in[0,1]$\footnote{\parbox[t]{16cm}{Indeed, $\alpha$ could be any number to carry out our theoretical analysis to be presented, but we are only interested in $\alpha \in [0,1]$ because a larger lower bound of $r \cdot s$ is less meaningful.}}. Hence, the theoretical lower bound of $r\cdot s$ determined by (\ref{Assum-CP}) is improved from scratch and it becomes possible to enlarge the step sizes for the subproblems (\ref{C-P-0x}) and (\ref{C-P-0y}) in lieu with the new condition (\ref{Assum}). We will prove in Section \ref{sec-connection-1} that the generalized CP-PPA (\ref{H-Y}) coincides with the CP-PPA (\ref{C-P}) for any $\alpha$ if the saddle point problem (\ref{Min-Max}) is formed by the canonical convex programming problem with linear equality constraints (see (\ref{Problem-LC})). Obviously, $\alpha=1/2$ is the best choice to yield the minimal value of $(1-\alpha +\alpha^2)= 0.75$. Hence, if we choose $\alpha=1/2$, then the improved condition (\ref{Assum}) becomes
\begin{equation} \label{Assum-0.75}
 r\cdot s > 0.75  \rho(A^T\!A),
\end{equation}
and this improvement is effective for the generic setting of the canonical convex programming problem with linear equality  constraints (see(\ref{Problem-LC})). More specifically, the improved condition (\ref{Assum-0.75}) theoretically enlarges the range of $r\cdot s$ so that better choices of $r$ and $s$ can be empirically probed to accelerate the CP-PPA (\ref{C-P}) for specific problems. We will verify the acceleration of (\ref{Assum-0.75}) in Section \ref{Sec-num} by some concrete applications. Indeed, we will prove in Section \ref{sec-connection-2} that the condition (\ref{Assum-0.75}) is optimal for the CP-PPA (\ref{C-P}) in the sense that there always exists an example for which the CP-PPA (\ref{C-P}) is divergent if the constant $0.75$ is replaced by any other smaller positive number in (\ref{Assum-0.75}). It is certainly non-meaningful to expect that the numerical acceleration could be dramatic, given that the lower bound is refined by $25\%$ from (\ref{Assum-CP}) to (\ref{Assum-0.75}) and the refinement is  effective for the generic canonical convex programming problem with linear equality  constraints. Note that the generalized CP-PPA \eqref{H-Y} does not entail any substantial additional computation because there is no need to compute the multiplications $Ax^k$ and $A{x}^{k+1}$ in the step \eqref{H-Y-y-2}; they are available when the subproblems \eqref{H-Y-x} and \eqref{H-Y-y} are solved. It is also worth recalling that convergence of the algorithm (\ref{C-P-a}) is unknown for $\alpha \in (0,1)$, while convergence of the generalized CP-PPA (\ref{H-Y}) can be rigorously ensured (see Section \ref{Sec-con}), along with its provable worst-case convergence rate measured by iteration complexity (see Section \ref{sec-convrate}), for any $\alpha \in [0,1]$.

The other purpose of this work is to further relax the condition (\ref{Assum-CP}) heuristically for some specific cases of the saddle point problem (\ref{Min-Max}), and improve the performance of the CP-PPA (\ref{C-P}) empirically by discerning even larger step sizes. This consideration is necessary if the corresponding $\rho(A^T\!A)$ is too large and thus $r\cdot s$ is restricted by a large lower bound in either (\ref{Assum-CP}) or (\ref{Assum}). For this case, it is necessary for implementing the CP-PPA (\ref{C-P}) to avoid tiny step sizes. Generically, a convergence-guaranteeing condition for certain algorithm is sufficient, but not necessary, and thus such a condition is generally too conservative despite that the sufficiency favors theoretical analysis and mathematical rigor. This means such a sufficient condition can and should be relaxed to some extent when an algorithm is implemented, though mathematical rigor may not be maintained. Of course, relaxing such a theoretically sufficient condition should not be random, but be niche targeting. For the condition (\ref{Assum-CP}), one possible strategy is exploring the structure of $A^T\!A$ per se meticulously for a given specific problem, and combining it with the generic-purpose condition (\ref{Assum-CP}). Indeed, such a heuristical study for a specifically given problem is complementary to the methodological and theoretical study in the generic setting. We will show in Section \ref{sec-exp} that this heuristic idea works extremely well for the classic assignment problem when it is relaxed as a convex programming problem with linear equality constraints.

%
%

\section{Preliminaries}\label{sec-pre}
\setcounter{equation}{0}
\setcounter{remark}{0}

In this section, we summarize some preliminaries and recall some known results for further analysis.

\subsection{Lemma}

The following lemma is basic and will be frequently used in our analysis. Its proof can be found in, e.g., \cite{Beck}.

\begin{lemma} \label{CP-TF}
\begin{subequations} \label{CP-TF0}
 Let ${\cal Z} \subseteq \Re^n$ be a closed convex set, $\theta(z)$ and $f(z)$ be convex functions.
   If $f$ is differentiable on an open set which contains ${\cal Z}$, and the solution set of the minimization problem
   $$
   \min\{\theta(z) + f(z) \; |\; z\in {\cal Z}\}
   $$ is nonempty, then we have
  \[  \label{CP-TF1}   z^*  \in \arg\min \{  {\theta}(z) + f(z)  \; | \;  z\in {\cal Z}\}
     \]
if and only if
\[\label{CP-TF2}
      z^*\in {\cal Z}, \quad   \theta(z) - \theta(z^*) + (z-z^*)^T\nabla f(z^*) \ge 0, \quad \forall\, z\in {\cal Z}.
      \]
\end{subequations}
\end{lemma}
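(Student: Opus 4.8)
The plan is to prove the two implications of the equivalence separately, relying only on the convexity of $\theta$ and $f$, the convexity of ${\cal Z}$, and the first-order (gradient) inequality for convex differentiable functions. For the direction \eqref{CP-TF1}$\,\Rightarrow\,$\eqref{CP-TF2}, suppose $z^*$ solves $\min\{\theta(z)+f(z)\mid z\in{\cal Z}\}$ and fix an arbitrary $z\in{\cal Z}$. Since ${\cal Z}$ is convex, the point $z_t:=z^*+t(z-z^*)$ lies in ${\cal Z}$ for every $t\in(0,1]$, so optimality of $z^*$ gives $\theta(z_t)+f(z_t)\ge\theta(z^*)+f(z^*)$, while convexity of $\theta$ gives $\theta(z_t)\le(1-t)\theta(z^*)+t\theta(z)$. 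Combining these and rearranging,
$$ t\big(\theta(z)-\theta(z^*)\big)+\big(f(z_t)-f(z^*)\big)\ \ge\ 0. $$
Dividing by $t>0$ and letting $t\downarrow0$, the quotient $\big(f(z_t)-f(z^*)\big)/t$ converges to $(z-z^*)^T\nabla f(z^*)$, because $f$ is differentiable on an open set containing ${\cal Z}$; this yields $\theta(z)-\theta(z^*)+(z-z^*)^T\nabla f(z^*)\ge0$, which together with $z^*\in{\cal Z}$ is exactly \eqref{CP-TF2}.

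For the converse \eqref{CP-TF2}$\,\Rightarrow\,$\eqref{CP-TF1}, assume $z^*\in{\cal Z}$ satisfies the variational inequality in \eqref{CP-TF2}. For any $z\in{\cal Z}$, the gradient inequality for the convex differentiable function $f$ reads $f(z)\ge f(z^*)+(z-z^*)^T\nabla f(z^*)$. Adding $\theta(z)$ to both sides and then invoking \eqref{CP-TF2},
$$ \theta(z)+f(z)\ \ge\ \big[\theta(z)-\theta(z^*)+(z-z^*)^T\nabla f(z^*)\big]+\theta(z^*)+f(z^*)\ \ge\ \theta(z^*)+f(z^*), $$
so $z^*$ minimizes $\theta+f$ over ${\cal Z}$, that is, \eqref{CP-TF1} holds.

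There is no genuine obstacle in this argument, which is a standard first-order optimality characterization; the only point needing slight care is the passage to the limit in the first implication, where differentiability of $f$ on a neighborhood of ${\cal Z}$ is precisely what guarantees that the one-sided difference quotient along the feasible segment converges to $\nabla f(z^*)^T(z-z^*)$ (for merely convex nonsmooth $f$ one would instead land on a subgradient inequality). The hypothesis that the solution set is nonempty is used only to ensure that a minimizer $z^*$ exists so that the statement is not vacuous; it plays no role in the implication \eqref{CP-TF2}$\,\Rightarrow\,$\eqref{CP-TF1}.
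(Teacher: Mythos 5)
Your proof is correct: both implications are argued cleanly, and the one delicate point --- passing to the limit in the difference quotient along the feasible segment, which is where differentiability of $f$ on a neighborhood of ${\cal Z}$ is genuinely used --- is handled properly. The paper itself does not prove this lemma but simply cites \cite{Beck}; your argument is the standard first-order optimality characterization found there, so there is nothing to reconcile.
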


\subsection{Variational inequality reformulation}

Similar as our previous work \cite{HeYuanSIAMIS}, theoretical analysis for the generalized CP-PPA (\ref{H-Y}) will be based on the variational inequality (VI) reformulation of the saddle point problem (\ref{Min-Max}). Note that a solution point of (\ref{Min-Max}), denoted by $(x^*,y^*)\in {\cal X}\times {\cal Y}$, satisfies the following inequalities:
$${\cal L}_{y\in {\cal Y}}(x^*,y) \le {\cal L}(x^*,y^*) \le {\cal L}_{x\in {\cal X}}(x,y^*).$$
That is, for a solution point $(x^*, y^*)$ of (\ref{Min-Max}), we have
$$
    \left\{ \begin{array}{l}
     x^*  \in \arg\min \big\{ {\cal L}(x,y^*)  \;|\;  x\in {\cal X} \big\},   \\[0.1cm]
     y^*\in \arg\max \big\{{\cal L}(x^*,y) \;|\; y\in {\cal Y}\big\}.
        \end{array} \right.
       $$
Recall that the functions $f$ and $g$ in (\ref{Min-Max}) are not necessarily smooth. According to Lemma \ref{CP-TF}, $(x^*, y^*)$ satisfies the following inequalities:
$$  
    \left\{ \begin{array}{lll}
     x^*\in {\cal X},  &   f(x) -  f(x^*) + (x-x^*)^T(- A^Ty^*) \ge 0, & \forall\; x\in {\cal X}, \\[0.1cm]
     y^*\in {\cal Y},  &     g(y) - g(y^*)  + (y-y^*)^T(Ax^*)\ge 0,  &  \forall \;  y\in {\cal Y}.
        \end{array} \right.
       $$
These two inequalities can be compactly written as the following VI:
\begin{subequations}\label{VI}
\[  \label{VI-S}
 \hbox{VI}(\Omega, F,\theta): \quad  u^*\in \Omega, \quad \theta(u) -\theta(u^*) + (u-u^*)^T F(u^*) \ge 0, \quad \forall \,  u\in
      \Omega,
     \]
where
\[ \label{Notation-uFO}
    u = \left(\begin{array}{c}
                     x\\
                   y \end{array} \right),
  \quad \theta(u) = f(x) + g(y), \quad
    F(u) =\left(\begin{array}{c}
     - A^Ty \\
     Ax \end{array} \right)  \quad\hbox{and}\quad \Omega = {\cal X} \times {\cal Y}.
    \]
   \end{subequations}
Note that the operator $F$ given in (\ref{Notation-uFO}) is monotone, because
 \[\label{EQF}
    (u- v)^T(F(u) -F(v))\equiv 0, \quad \forall\; u,v \in \Re^{(n+m)}. \]
We denote by $\Omega^*$ the solution set of the VI \eqref{VI} throughout.

\section{Connection between (\ref{C-P}) and (\ref{H-Y})}\label{sec-connection}
\setcounter{equation}{0}
\setcounter{remark}{0}

In this section, we take a closer look at the generalized CP-PPA (\ref{H-Y}) and its connection to the CP-PPA (\ref{C-P}) in the context of the canonical convex programming problem with linear equality constraints. That is, we consider
\begin{equation}\label{Problem-LC}
   \min \{ f(x)   \;|\;  Ax=b, \,  x\in {\cal X} \},
\end{equation}
where $f: \Re^n\to \Re$ is a proper convex function, ${\cal X}\subseteq\Re^n$ is a closed convex set, $A\in\Re^{m \times n}$ is a given matrix, and $b\in \Re^m$. The Lagrangian function of \eqref{Problem-LC} is
$$
  L(x,y) = f(x) -  y^T(Ax-b)=f(x) -  y^TAx-(-b^Ty),
$$
with $y \in \Re^m$ the Lagrange multiplier. Then, $(x^*,y^*)$ is called a saddle point of $ L(x,y)$ if the following inequalities hold:
$$    L_{y\in\Re^m}(x^*,y) \le L(x^*,y^*) \le L_{x\in {\cal X}}(x,y^*). $$
It is clear that finding a saddle point of  $L(x,y)$ is a special case of \eqref{Min-Max} with $g(y) = -b^Ty$ and ${\cal Y} = \Re^m$.

\subsection{Equivalence of (\ref{C-P}) and (\ref{H-Y}) for (\ref{Problem-LC})}\label{sec-connection-1}

When the CP-PPA \eqref{C-P} is applied to the specific model \eqref{Problem-LC}, it follows from $g(y) = -b^Ty$ that
 $$    L( [2x^{k+1}  -x^k], y) = f(2x^{k+1}  -x^k) -  y^TA(2x^{k+1}  -x^k)  + b^Ty.  $$
Because  ${\cal Y}=\Re^m$, the solution of the subproblem \eqref{C-P-0y} satisfies
 $$   -A(2x^{k+1}  -x^k)  + b - s(y^{k+1} - y^k) = 0.   $$
Thus, the CP-PPA \eqref{C-P} for (\ref{Problem-LC}) is specified as
 \begin{subequations} \label{C-PA}
\begin{numcases}{}
\label{C-PA-x}  x^{k+1} =\arg\min \big\{ L(x,y^k) + \frac{r}{2}\|x-x^k\|^2 \;|\; x\in {\cal X} \big\},\\
\label{C-PA-l}
       y^{k+1} = y^k - \frac{1}{s}\bigl[     A(2x^{k+1} -x^k) -b\bigr].
\end{numcases}
\end{subequations}
Similarly, it is easy to see that the generalized CP-PPA \eqref{H-Y} for \eqref{Problem-LC} is specified as
 \begin{subequations} \label{H-Y-1}
\begin{numcases}{}
\label{H-Y-1-x}  x^{k+1} = \arg\min\big\{ L(x,y^k) + \frac{r}{2}\|x-x^k\|^2  \;|\;  x\in {\cal X} \big\},\\
\label{H-Y-1-y}  \bar{y}^{k+1}  = y^k - \frac{1}{s} \bigl(  A[{x}^{k+1} +\alpha(x^{k+1}-x^k)] -b   \bigr),\\
\label{H-Y-1-y1} y^{k+1} = \bar{y}^{k+1}  -(1-\alpha)\frac{1}{s}  A({x}^{k+1}-x^k).
\end{numcases}
\end{subequations}
To see the equivalence of (\ref{C-PA}) and (\ref{H-Y-1}) for the specific model (\ref{Problem-LC}), we know that
 \begin{eqnarray*}
        y^{k+1}  & \stackrel{\eqref{H-Y-1-y1}}{=}  &  \bar{y}^{k+1}  -(1-\alpha)\frac{1}{s}  A({x}^{k+1}-x^k)  \nn  \\
              & \stackrel{\eqref{H-Y-1-y}}{=} &  y^k -   \frac{1}{s} \bigl(  A[{x}^{k+1} +\alpha(x^{k+1}-x^k)] -b   \bigr)
              -(1-\alpha)\frac{1}{s}  A({x}^{k+1}-x^k)      \nn\\
              & = & y^k - \frac{1}{s}\bigl[     A(2x^{k+1} -x^k) -b\bigr].
        \end{eqnarray*}
Thus, (\ref{C-PA}) and (\ref{H-Y-1}) coincide for any $\alpha$ when the specific model (\ref{Problem-LC}) is considered.

\subsection{Optimality of (\ref{Assum-0.75}) for CP-PPA (\ref{C-P})}\label{sec-connection-2}

We take the example in \cite{HMY2020IMA}, which is a special case of the model (\ref{Problem-LC}), to show that the condition (\ref{Assum-0.75}) is optimal for the CP-PPA (\ref{C-P}) to ensure its convergence in the generic setting of (\ref{Min-Max}). Recall that the example in \cite{HMY2020IMA} is
\begin{equation}\label{counterexample}
  \min\{0\cdot x \;\;|\;\; x=0,\; x\in\Re\}.
\end{equation}
It is clear that $x=0$ is the solution point of \eqref{counterexample} and $\rho(A^T\!A)=1$ because $A=1$ for the example.

Now, we show that convergence of the CP-PPA (\ref{C-P}) for the example (\ref{counterexample}) is not guaranteed if the constant $0.75$ in (\ref{Assum-0.75}) is replaced by any other smaller positive number. Let us fix $s=1$. Then, the condition (\ref{Assum-0.75}) becomes $r>0.75$ and we need to show that the CP-PPA \eqref{C-PA} is divergent for any $0<r<0.75$. When the problem (\ref{counterexample}) is considered, it is easy to derive that the CP-PPA (\ref{C-PA}) with $s\equiv 1$ becomes
 \begin{subequations} \label{C-PA1}
\begin{numcases}{}
\label{C-PA-xx}  x^{k+1} = x^k+\frac{1}{r}y^k, \\
\label{C-PA-yy}  y^{k+1} = y^k -(2x^{k+1} -x^k).
\end{numcases}
\end{subequations}
Note that
$$y^{k+1} \overset{\eqref{C-PA-yy}}{=} y^k -(2x^{k+1} -x^k) \overset{\eqref{C-PA-xx}}{=} y^k -\big\{2(x^k+\frac{1}{r}y^k) -x^k\big\} = -x^k+(1-\frac{2}{r})y^k. $$
Hence, iterations generated by \eqref{C-PA1} can be recursively rewritten as
\begin{equation}\label{C-PA2}
  u^{k+1}=\mathcal{P}(r)u^k \quad \hbox{with} \quad \mathcal{P}(r) = \left(
                                                   \begin{array}{cc}
                                                     1 & \frac{1}{r} \\
                                                     -1 & 1-\frac{2}{r} \\
                                                   \end{array}
                                                 \right).
\end{equation}
It is clear that the matrix $\mathcal{P}(r)$  has the following two eigenvalues:
\begin{equation}\label{eigenfuc}
  \lambda_1(r)=(1-\frac{1}{r})-\sqrt{\frac{1}{r^2}-\frac{1}{r}} \quad \hbox{and} \quad  \lambda_2(r)=(1-\frac{1}{r})+\sqrt{\frac{1}{r^2}-\frac{1}{r}}.
\end{equation}
Thus, when $r=0.75$, we have $\lambda_1(0.75)=-1$ and $\lambda_2(0.75)=1/3$. Note that
$$\lambda_1^\prime(r)=\frac{1}{r^2}+\frac{\frac{2}{r^3}-\frac{1}{r^2}}{2\sqrt{\frac{1}{r^2}-\frac{1}{r}}}>0 \quad \hbox{when} \quad r\in(0,1).$$
That is, $\lambda_1(r)<-1$ if $0<r<0.75$, and this means divergence of the iterations (\ref{C-PA2}) for any $0<r<0.75$. Hence, the condition (\ref{Assum-0.75}) is optimal for the CP-PPA (\ref{C-P}) in the sense that there always exists an example such that it is divergent if the constant $0.75$ in (\ref{Assum-0.75}) is replaced by any other smaller positive number.

\section{Convergence analysis}\label{Sec-con}
\setcounter{equation}{0}
\setcounter{remark}{0}

In this section, we prove convergence of the generalized CP-PPA (\ref{H-Y}) with the improved condition (\ref{Assum}).

\subsection{Some matrices}

To simplify the notation for the convergence analysis, we first define two matrices as the following:
\begin{equation}\label{QM}
 Q= \left(\begin{array}{cc}
       r I_n &   A^T\\
    \alpha A   & sI_m \end{array}\right) \quad \hbox{and} \quad  M = \left(\begin{array}{cc}
        I_n &   0 \\
         -(1-\alpha)\frac{1}{s} A   & I_m \end{array} \right).
\end{equation}
Then, with the matrices $M$ and $Q$ defined in \eqref{QM}, we define two more matrices as
 \begin{equation}\label{HMQG}
   H:= QM^{-1} \quad \hbox{and} \quad G:= Q^T + Q -M^THM,
 \end{equation}
and show that the condition (\ref{Assum}) ensures their positive definiteness.

\begin{proposition}\label{propos}
The condition \eqref{Assum} ensures the positive definiteness of both the matrices $H$ and $G$ defined in \eqref{HMQG}.
\end{proposition}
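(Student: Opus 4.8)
The plan is to compute $H = QM^{-1}$ explicitly, verify it is symmetric (this is the crucial structural fact that makes the whole PPA interpretation work), and then compute $G = Q^T + Q - M^T H M$ and show both reduce to $2\times 2$ block forms whose positive definiteness is governed exactly by the condition \eqref{Assum}. First I would invert $M$; since $M$ is block lower-triangular with identity diagonal blocks, $M^{-1} = \left(\begin{smallmatrix} I_n & 0 \\ (1-\alpha)\frac1s A & I_m \end{smallmatrix}\right)$. Multiplying out $QM^{-1}$, the $(1,1)$ block becomes $rI_n + \frac{1-\alpha}{s}A^TA$, the $(1,2)$ block stays $A^T$, the $(2,1)$ block becomes $\alpha A + (1-\alpha)A = A$, and the $(2,2)$ block stays $sI_m$. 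So
\[
H = \begin{pmatrix} rI_n + \frac{1-\alpha}{s}A^TA & A^T \\ A & sI_m \end{pmatrix},
\]
which is manifestly symmetric. For positive definiteness of $H$ I would use the Schur complement with respect to the $sI_m$ block: $H \succ 0$ iff $sI_m \succ 0$ (true) and $rI_n + \frac{1-\alpha}{s}A^TA - \frac1s A^TA = rI_n - \frac{\alpha}{s}A^TA \succ 0$, i.e. $rs > \alpha\,\rho(A^TA)$. Since $\alpha \le 1 - \alpha + \alpha^2$ for $\alpha\in[0,1]$ (equivalently $\alpha(1-\alpha)\ge 0$), condition \eqref{Assum} implies this, so $H \succ 0$.

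Next I would turn to $G = Q^T + Q - M^THM$. Using $H = QM^{-1}$, note $M^THM = M^TQM^{-1}M = M^TQ$, so $G = Q^T + Q - M^TQ = Q^T + (I - M^T)Q = Q^T + (I-M)^TQ$. Now $I - M = \left(\begin{smallmatrix} 0 & 0 \\ (1-\alpha)\frac1s A & 0\end{smallmatrix}\right)$, so $(I-M)^T Q = \left(\begin{smallmatrix} 0 & (1-\alpha)\frac1s A^T \\ 0 & 0\end{smallmatrix}\right)\left(\begin{smallmatrix} rI_n & A^T \\ \alpha A & sI_m\end{smallmatrix}\right) = \left(\begin{smallmatrix} \frac{\alpha(1-\alpha)}{s}A^TA & (1-\alpha)A^T \\ 0 & 0\end{smallmatrix}\right)$. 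Adding $Q^T = \left(\begin{smallmatrix} rI_n & \alpha A^T \\ A & sI_m\end{smallmatrix}\right)$ gives
\[
G = \begin{pmatrix} rI_n + \frac{\alpha(1-\alpha)}{s}A^TA & (1-\alpha+\alpha)A^T \\ A & sI_m\end{pmatrix} = \begin{pmatrix} rI_n + \frac{\alpha(1-\alpha)}{s}A^TA & A^T \\ A & sI_m\end{pmatrix},
\]
which is symmetric. Again by the Schur complement on the $sI_m$ block, $G \succ 0$ iff $rI_n + \frac{\alpha(1-\alpha)}{s}A^TA - \frac1s A^TA \succ 0$, i.e. $rI_n - \frac{1-\alpha+\alpha^2}{s}A^TA \succ 0$ (using $1 - \alpha(1-\alpha) = 1-\alpha+\alpha^2$), i.e. $rs > (1-\alpha+\alpha^2)\rho(A^TA)$, which is precisely \eqref{Assum}.

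I expect the main (and only genuine) obstacle to be the bookkeeping in the block matrix multiplications — in particular making sure the off-diagonal blocks of $G$ collapse to exactly $A^T$ and $A$ (the cross terms $\alpha A^T$ from $Q^T$ and $(1-\alpha)A^T$ from $(I-M)^TQ$ must add to $A^T$, which they do) and that the symmetrizing coefficient on $A^TA$ works out to $\alpha(1-\alpha)$ in $G$ versus $1-\alpha$ in $H$. Everything else is a routine Schur-complement argument, and the appearance of the two thresholds $\alpha\rho(A^TA)$ for $H$ and $(1-\alpha+\alpha^2)\rho(A^TA)$ for $G$ makes it transparent that \eqref{Assum}, being the stronger of the two requirements, is exactly what is needed.
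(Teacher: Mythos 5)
Your proof is correct and essentially identical to the paper's: your explicit computations of $H=QM^{-1}$ and $G=Q^T+Q-M^TQ$ match \eqref{Matrix-H} and \eqref{Matrix-G}, and your Schur-complement test on the $sI_m$ block is exactly the paper's congruence $C^THC$, $C^TGC$ with $C=\bigl(\begin{smallmatrix} I_n & 0\\ -\frac{1}{s}A & I_m\end{smallmatrix}\bigr)$ followed by the law of inertia, producing the same two thresholds $\alpha\,\rho(A^T\!A)$ for $H$ and $(1-\alpha+\alpha^2)\rho(A^T\!A)$ for $G$. One cosmetic slip: $\alpha\le 1-\alpha+\alpha^2$ rearranges to $(1-\alpha)^2\ge 0$ (true for all real $\alpha$), not to $\alpha(1-\alpha)\ge 0$; this does not affect the argument.
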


\begin{proof}
It follows from \eqref{QM} and \eqref{HMQG} that
\begin{equation}\label{Matrix-H}
  H  =  QM^{-1}
       = \left(\begin{array}{cc}
       r I_n  &   A^T\\
       \alpha A  & sI_m \end{array} \right)  \left(\begin{array}{cc}
        I_n &  0  \\
         (1-\alpha)\frac{1}{s} A   & I_m \end{array} \right)  =
         \left(\begin{array}{cc}
         r I_n + (1-\alpha)\frac{1}{s} A^T\!A &   A^T\\[0.1cm]
         A   & sI_m \end{array} \right),
\end{equation}
and
\begin{eqnarray}  \label{Matrix-G}
  G  &=  &    Q^T +Q -  M^THM   =   Q^T +Q -  M^TQ   \nn    \\[0.1cm]
       & = &  \left(\begin{array}{cc}
       2 r I_n &  (1+\alpha)  A^T\\
         (1+\alpha)   A  & 2sI_m \end{array} \right) -  \left(\begin{array}{cc}
        I_n &  - (1-\alpha)\frac{1}{s} A^T  \\
         0  & I_m \end{array} \right)\left(\begin{array}{cc}
        r I_n &   A^T\\
        \alpha A  &  sI_m \end{array} \right)   \nn \\[0.1cm]
        & = &      \left(\begin{array}{cc}
       2 r I_n &  (1+\alpha)  A^T\\
         (1+\alpha)   A  & 2sI_m \end{array} \right)  -
          \left(\begin{array}{cc}
        r I_n  - \alpha(1-\alpha) \frac{1}{s} A^T\!A&  \alpha  A^T\\
         \alpha   A  & sI_m \end{array} \right)    \nn \\
          & =&  \left(\begin{array}{cc}
        r I_n  + \alpha(1-\alpha) \frac{1}{s} A^T\!A&    A^T\\
            A  & sI_m \end{array} \right).
       \end{eqnarray}
Let us define a nonsingular matrix as
 $$    C=  \left(\begin{array}{cc}
               I_n   & 0  \\
                 -\frac{1}{s}  A  &   I_m  \end{array} \right).   $$
Then, it is clear that
\begin{eqnarray*}
C^T  HC  & =  &   \left(\begin{array}{cc}
                   I_n   &     -\frac{1}{s}  A^T \\
                 0    &  I_m \end{array} \right)
                   \left(\begin{array}{cc}
       r I_n + (1-\alpha)\frac{1}{s} A^T\!A &   A^T\\
         A   & sI_m \end{array} \right)    \left(\begin{array}{cc}
                   I_n  & 0 \\
           -\frac{1}{s}  A   &I_m \end{array} \right)    \nn \\
           & = &    \left(\begin{array}{cc}
       r I_n -\frac{\alpha}{s} A^T\!A &   0\\[0.1cm]
         0  & sI_m \end{array} \right).
\end{eqnarray*}
Hence, it follows from the law of inertia that the matrix $H$ is positive definite if and only if  $r\cdot s\cdot I_n  \succ \alpha A^T\!A $. Since $1-\alpha+\alpha^2\geq\alpha$ for any $\alpha\in\Re$, it is clear that the matrix $H\succ0$ when $r\cdot s > (1-\alpha +\alpha^2)\rho(A^T\!A)$. Similarly,  because
       \begin{eqnarray*}
       C^T  GC  & =   &   \left(\begin{array}{cc}
                   I_n   &     -\frac{1}{s}  A^T \\
                 0    &  I_m \end{array} \right)
       \left(\begin{array}{cc}
        r I_n  + \alpha(1-\alpha) \frac{1}{s} A^T\!A&    A^T\\
            A  & sI_m \end{array} \right)
                    \left(\begin{array}{cc}
                   I_n  & 0 \\
           -\frac{1}{s}  A   &I_m \end{array} \right)    \nn \\
           & = &    \left(\begin{array}{cc}
       r I_n -\frac{1}{s}(1-\alpha +\alpha^2) A^T\!A &   0\\[0.1cm]
         0  & sI_m \end{array} \right),
         \end{eqnarray*}
the matrix $G$ is positive definite if and only if $r\cdot s \cdot I_n \succ  (1-\alpha +\alpha^2) A^T\!A$.
 It is obvious that $G\succ0$ when $r\cdot s > (1-\alpha +\alpha^2)\rho(A^T\!A)$. This completes the proof of this proposition.
 \end{proof}

\subsection{Prediction-correction representation}

Then, we rewrite the generalized CP-PPA \eqref{H-Y} as a prediction-correction framework, and this framework further helps us conduct the convergence analysis with easier notation. More specifically, for a given $u^k =(x^k, y^k)$, the iterate $u^{k+1}=(x^{k+1}, y^{k+1})$ generated by the generalized CP-PPA (\ref{H-Y}) with (\ref{Assum}) can be represented as the following two steps. Recall that the matrix $M$ in (\ref{C-P-C}) is defined in (\ref{QM}).

\begin{center}\fbox{
 \begin{minipage}{16cm}
 \smallskip
 \begin{subequations} \label{C-P-P}
\begin{numcases}{\hbox{\bf(Prediction)}}
\label{C-PP-x}  \tilde{x}^{k}  = \arg\min \big\{ {\cal L}(x,y^k) + \frac{r}{2}\|x-x^k\|^2    \;|\;   x\in {\cal X} \big\},\\[0.1cm]
\label{C-PP-y}  \tilde{y}^{k}  = \arg\max \big\{ {\cal L}([\tilde{x}^k +\alpha(\tilde{x}^k-x^k)],y) - \frac{s}{2}\|y-y^k\|^2   \;|\;  y\in {\cal Y} \big\}.
\end{numcases}
\end{subequations}

\begin{equation}\label{C-P-C}
  \quad \hbox{\bf(Correction)} \quad  u^{k+1}  =  u^k  - M(u^k -\tilde{u}^k). \qquad \qquad\qquad \qquad\qquad \qquad \qquad \quad
\end{equation}
\smallskip
\vspace{-0.3cm}
 \end{minipage}}
\end{center}

We would emphasize that this prediction-correction representation is merely used for theoretical analysis, and there is no need to implement the generalized CP-PPA \eqref{H-Y} by following this prediction-correction framework.

\subsection{Convergence}

Now, we take advantage of the prediction-correction representation (\ref{C-P-P})-(\ref{C-P-C}), and prove convergence of the generalized CP-PPA (\ref{H-Y}). The following lemma characters the difference of the predictor $\tilde{u}^k$ represented by (\ref{C-P-P}) from a solution point of the VI (\ref{VI}).

\begin{lemma}  Let $\tilde{u}^k = (\tilde{x}^k, \tilde{y}^k)$ be the predictor represented  by \eqref{C-P-P} with given $u^k =(x^k, y^k)$. Then, we have
\begin{equation}\label{Pred1}
  \tilde{u}^k\in\Omega, \quad \theta(u) -\theta(\tilde{u}^{k}) + (u-\tilde{u}^{k})^T F(\tilde{u}^{k}) \ge (u-\tilde{u}^{k})^T  Q(u^k  -\tilde{u}^{k}), \quad  \forall \; u\in  \Omega,
\end{equation}
where the matrix $Q$ is defined in \eqref{QM}.
\end{lemma}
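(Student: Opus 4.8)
The plan is to apply the first-order optimality characterization from Lemma~\ref{CP-TF} to each of the two subproblems in the prediction step~\eqref{C-P-P}, and then combine the resulting two variational inequalities into the single compact form~\eqref{Pred1}. First I would treat the $x$-subproblem~\eqref{C-PP-x}: here the objective is ${\cal L}(x,y^k)+\frac r2\|x-x^k\|^2 = f(x) - (y^k)^TAx - g(y^k) + \frac r2\|x-x^k\|^2$, so the smooth part (in $x$) has gradient $-A^Ty^k + r(x-x^k)$ evaluated at the minimizer $\tilde x^k$. Lemma~\ref{CP-TF} then gives $\tilde x^k\in{\cal X}$ and $f(x)-f(\tilde x^k) + (x-\tilde x^k)^T\big(-A^Ty^k + r(\tilde x^k - x^k)\big)\ge 0$ for all $x\in{\cal X}$.

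Next I would do the $y$-subproblem~\eqref{C-PP-y}. Since it is a \emph{maximization}, I rewrite it as minimizing $-{\cal L}(\bar x^k,y)+\frac s2\|y-y^k\|^2$ where $\bar x^k = \tilde x^k + \alpha(\tilde x^k - x^k)$; the $y$-dependent part is $g(y) + y^TA\bar x^k + \frac s2\|y-y^k\|^2$, whose smooth part has gradient $A\bar x^k + s(y-y^k)$ at $\tilde y^k$. Lemma~\ref{CP-TF} yields $\tilde y^k\in{\cal Y}$ and $g(y)-g(\tilde y^k) + (y-\tilde y^k)^T\big(A\bar x^k + s(\tilde y^k - y^k)\big)\ge 0$ for all $y\in{\cal Y}$. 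The key algebraic manoeuvre is to rewrite $A\bar x^k$, the term that multiplies $(y-\tilde y^k)$, as $A\tilde x^k$ plus a correction: $A\bar x^k = A\tilde x^k + \alpha A(\tilde x^k - x^k)$. Then the ``$A\tilde x^k$'' piece is exactly the $y$-block of $F(\tilde u^k)$, and the leftover $\alpha A(\tilde x^k - x^k)$ is to be absorbed into the right-hand side; similarly in the $x$-inequality I write $-A^Ty^k = -A^T\tilde y^k + A^T(\tilde y^k - y^k)$, so $-A^T\tilde y^k$ is the $x$-block of $F(\tilde u^k)$.

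Finally I would add the two inequalities. With $u=(x,y)$, $\tilde u^k=(\tilde x^k,\tilde y^k)$, $\theta(u)=f(x)+g(y)$ and $F$ as in~\eqref{Notation-uFO}, the sum of the left-hand sides is $\theta(u)-\theta(\tilde u^k) + (u-\tilde u^k)^TF(\tilde u^k)$, and the sum of the ``correction'' terms collected on the right is
\[
(x-\tilde x^k)^T\big(r(x^k-\tilde x^k) - A^T(\tilde y^k - y^k)\big) + (y-\tilde y^k)^T\big(s(y^k-\tilde y^k) - \alpha A(\tilde x^k - x^k)\big).
\]
It remains to check that this equals $(u-\tilde u^k)^TQ(u^k-\tilde u^k)$ with $Q$ as in~\eqref{QM}. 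Expanding $Q(u^k-\tilde u^k)$ block-wise gives first block $r(x^k-\tilde x^k) + A^T(y^k-\tilde y^k)$ and second block $\alpha A(x^k-\tilde x^k) + s(y^k-\tilde y^k)$, which matches the display above term by term (noting $-A^T(\tilde y^k-y^k)=A^T(y^k-\tilde y^k)$ and $-\alpha A(\tilde x^k - x^k)=\alpha A(x^k-\tilde x^k)$). This confirms~\eqref{Pred1}. The only mildly delicate point—the ``main obstacle'' if anything—is bookkeeping the sign when converting the max-subproblem to a min-subproblem and making sure the off-diagonal $\alpha A$ (rather than $A$) in $Q$ comes out correctly; once the substitution $A\bar x^k = A\tilde x^k + \alpha A(\tilde x^k-x^k)$ is made, the rest is a direct block-matrix identification with no inequalities lost.
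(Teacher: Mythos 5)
Your proposal is correct and follows essentially the same route as the paper's proof: apply Lemma~\ref{CP-TF} to each subproblem in \eqref{C-P-P}, split $-A^Ty^k$ and $A[\tilde{x}^k+\alpha(\tilde{x}^k-x^k)]$ into the $F(\tilde{u}^k)$ part plus a correction, add the two inequalities, and identify the collected correction terms with $(u-\tilde{u}^k)^TQ(u^k-\tilde{u}^k)$. The sign bookkeeping for the max-subproblem and the block-wise match with $Q$ are both handled correctly.
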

\begin{proof}
For the subproblem \eqref{C-PP-x}, it follows from Lemma \ref{CP-TF} that
 $$ \tilde{x}^{k}\in {\cal X},   \quad   f(x) -f(\tilde{x}^{k}) + (x-\tilde{x}^{k})^T \{ -A^Ty^k + r(\tilde{x}^{k}-x^k) \} \ge 0, \quad  \forall \; x\in {\cal X}, $$
which can be further rewritten as
\begin{equation}\label{C-PP-x1}
  \tilde{x}^{k}\in {\cal X},   \quad  f(x) -f(\tilde{x}^{k}) + (x-\tilde{x}^{k})^T \{ -A^T\tilde{y}^k  + r(\tilde{x}^{k}-x^k) + A^T(\tilde{y}^k  -y^k) \} \ge 0, \quad  \forall \; x\in {\cal X}.
\end{equation}
Similarly, for the subproblem \eqref{C-PP-y}, we have
\begin{equation}\label{C-PP-l1}
  \tilde{y}^k  \in {\cal Y},   \quad  g(y) -g(\tilde{y}^{k}) +  (y-\tilde{y}^k )^T \{   A[\tilde{x}^{k} + \alpha(\tilde{x}^k -x^k)]  + s(\tilde{y}^k  -y^k) \} \ge 0, \quad  \forall \; y\in {\cal Y}.
\end{equation}
Adding \eqref{C-PP-x1} and \eqref{C-PP-l1}, we have $\tilde{u}^k=(\tilde{x}^{k},\tilde{y}^k )\in {\cal X} \times {\cal Y}$ such that
\begin{eqnarray*} 
   \lefteqn{ f(x)+g(y) - (f(\tilde{x}^k)+g(\tilde{y}^k))
     + \left(\begin{array}{c}
    x- \tilde{x}^{k} \\
     y - \tilde{y}^k
     \end{array}\right)^T
     \left\{
     \left(\begin{array}{c}
      - A^T\tilde{y}^k  \\
       A\tilde{x}^{k}
     \end{array}\right) \right. }\nn \\
     & &\qquad \qquad  \qquad \qquad  + \left.
  \left(\begin{array}{c}
      r(\tilde{x}^{k} -x^k)  + A^T(\tilde{y}^k  -y^k)\\
      \alpha A(\tilde{x}^k-x^k)   +  s(\tilde{y}^k  -y^k)
     \end{array}\right)  \right\} \ge 0, \quad \forall \; (x,y)\in
      {\cal X} \times {\cal Y}.
  \end{eqnarray*}
Using the notation in \eqref{VI} and the matrix $Q$ defined in \eqref{QM}, the assertion of this lemma follows immediately.
\end{proof}

The following lemma refines the right-hand side of the inequality (\ref{Pred1}), and it enables us to use the predefined matrices $H$ and $G$ in \eqref{HMQG} to quantify the difference of $\tilde{u}^k$ from a solution point of the VI (\ref{VI}) by quadratic terms.

\begin{lemma} \label{THM-HauptA}
Let $u^k$ be a given  vector. The predictor $\tilde{u}^k $ and the corrector  $u^{k+1}$  are represented by \eqref{C-P-P} and \eqref{C-P-C}, respectively. Then, we have
 \begin{equation}\label{THM-H-A}
  \theta(u) - \theta(\tilde{u}^k) + (u- \tilde{u}^k)^TF(u)  \ge  \frac{1}{2}\bigl(\|u-u^{k+1}\|_H^2 -\|u-u^k \|_H^2  \bigr) + \frac{1}{2}\|u^k -\tilde{u}^k\|_G^2, \;\;  \forall \; u\in \Omega,
 \end{equation}
 where the matrices $H$ and $G$ are defined in \eqref{HMQG}.
\end{lemma}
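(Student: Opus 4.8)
The plan is to start from the previous lemma's key inequality \eqref{Pred1}, namely
$$\theta(u)-\theta(\tilde u^k)+(u-\tilde u^k)^TF(\tilde u^k)\ge (u-\tilde u^k)^TQ(u^k-\tilde u^k),\quad\forall u\in\Omega,$$
and convert both sides into the desired form. On the left-hand side, I would invoke the monotonicity identity \eqref{EQF}, which here actually gives $(u-\tilde u^k)^T(F(u)-F(\tilde u^k))\equiv 0$, so that $(u-\tilde u^k)^TF(\tilde u^k)=(u-\tilde u^k)^TF(u)$; this immediately upgrades the left side of \eqref{Pred1} to $\theta(u)-\theta(\tilde u^k)+(u-\tilde u^k)^TF(u)$, which is exactly the left side of \eqref{THM-H-A}. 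So the whole content of the lemma is to show that the right-hand side $(u-\tilde u^k)^TQ(u^k-\tilde u^k)$ is bounded below by $\tfrac12(\|u-u^{k+1}\|_H^2-\|u-u^k\|_H^2)+\tfrac12\|u^k-\tilde u^k\|_G^2$.

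The main step is the standard ``$Q$-to-$H,G$'' algebraic manipulation. Using the correction formula \eqref{C-P-C}, $u^{k+1}=u^k-M(u^k-\tilde u^k)$, so $u^k-u^{k+1}=M(u^k-\tilde u^k)$, and since $H=QM^{-1}$ we have $Q(u^k-\tilde u^k)=QM^{-1}M(u^k-\tilde u^k)=H(u^k-u^{k+1})$. Hence the right-hand side of \eqref{Pred1} equals $(u-\tilde u^k)^TH(u^k-u^{k+1})$. I would then write $u-\tilde u^k=(u-u^k)+(u^k-\tilde u^k)$ and split:
$$(u-\tilde u^k)^TH(u^k-u^{k+1})=(u-u^k)^TH(u^k-u^{k+1})+(u^k-\tilde u^k)^TH(u^k-u^{k+1}).$$
For the first term I apply the elementary identity $(a-b)^TH(c-d)$-type polarization; concretely, with $a=u$, setting $u^k-u^{k+1}=M(u^k-\tilde u^k)$ fixed, I use
$$(u-u^k)^TH(u^k-u^{k+1})=\tfrac12\bigl(\|u-u^{k+1}\|_H^2-\|u-u^k\|_H^2\bigr)-\tfrac12\|u^k-u^{k+1}\|_H^2,$$
which is just the law of cosines for the $H$-inner product (valid because $H$ is symmetric, guaranteed by Proposition \ref{propos} and the observation $H=QM^{-1}$; one should note $H$ symmetric positive definite so $\|\cdot\|_H$ is a genuine norm). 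For the second term, substitute $u^k-u^{k+1}=M(u^k-\tilde u^k)$ to get $(u^k-\tilde u^k)^THM(u^k-\tilde u^k)$.

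Collecting the pieces, the right-hand side of \eqref{Pred1} becomes
$$\tfrac12\bigl(\|u-u^{k+1}\|_H^2-\|u-u^k\|_H^2\bigr)-\tfrac12\|u^k-u^{k+1}\|_H^2+(u^k-\tilde u^k)^THM(u^k-\tilde u^k).$$
Writing $d=u^k-\tilde u^k$, the last two terms equal $-\tfrac12 d^TM^THMd+d^THMd=\tfrac12 d^T(2HM-M^THM)d$. Finally I would symmetrize $2HM=HM+(HM)^T=QM^{-1}M+M^T M^{-T}Q^T\cdot$ — more cleanly, since $HM=Q$, we get $2HM-M^THM=2Q-M^TQ$, and symmetrizing (the quadratic form only sees the symmetric part) this is $Q^T+Q-M^THM=G$ by definition \eqref{HMQG}. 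Thus the right-hand side of \eqref{Pred1} is at least $\tfrac12(\|u-u^{k+1}\|_H^2-\|u-u^k\|_H^2)+\tfrac12\|d\|_G^2$, which is exactly \eqref{THM-H-A}. The only mild subtlety — and the one place to be careful — is the asymmetry of $Q$ and $M$: one must consistently use that quadratic forms $v^TNv$ depend only on $\tfrac12(N+N^T)$, so that $d^THMd=d^TQd=\tfrac12 d^T(Q+Q^T)d$ and the cross term reorganizes correctly into $G$; everything else is routine bookkeeping.
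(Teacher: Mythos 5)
Your proposal is correct and follows essentially the same route as the paper: both exploit the monotonicity of $F$ to replace $F(\tilde u^k)$ by $F(u)$, use $Q=HM$ together with the correction step to rewrite $(u-\tilde u^k)^TQ(u^k-\tilde u^k)$ as $(u-\tilde u^k)^TH(u^k-u^{k+1})$, and then identify the leftover quadratic in $u^k-\tilde u^k$ with $\tfrac12\|u^k-\tilde u^k\|_G^2$ via $2d^TQd=d^T(Q^T+Q)d$. The only cosmetic difference is that the paper applies a single four-point identity $(a-b)^TH(c-d)=\tfrac12\{\|a-d\|_H^2-\|a-c\|_H^2\}+\tfrac12\{\|c-b\|_H^2-\|d-b\|_H^2\}$, whereas you split $u-\tilde u^k=(u-u^k)+(u^k-\tilde u^k)$ and use the law of cosines on the first piece; the two bookkeepings are algebraically identical.
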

\begin{proof}
According to \eqref{C-P-C} and $Q=HM$ (see \eqref{HMQG}), the right-hand side of \eqref{Pred1} can be rewritten as
 $$ (u-\tilde{u}^{k})^T Q(u^k  -\tilde{u}^{k}) = (u-\tilde{u}^{k})^T  HM(u^k  -\tilde{u}^{k}) \overset{\eqref{C-P-C}}{=}  (u -\tilde{u}^{k})^T  H(u^k  -{u}^{k+1}).$$
 It follows from the monotonicity of $F$ (see \eqref{EQF}) that
 $$ \theta(u) - \theta(\tilde{u}^k) + (u- \tilde{u}^k )^T F(u)\equiv \theta(u) - \theta(\tilde{u}^k) + (u- \tilde{u}^k )^T F(\tilde{u}^k).$$
The inequality \eqref{Pred1} is thus equivalent to
\begin{equation}\label{LEM-M1-1}
\theta(u) - \theta(\tilde{u}^k) +  (u- \tilde{u}^k )^T F(u)  \ge  (u- \tilde{u}^k)^T H(u^k-u^{k+1}),  \quad  \forall \; u\in \Omega.
\end{equation}
Applying the identity
$$(a-b)^TH(c-d) = \frac{1}{2} \big\{\|a-d\|_H^2 -\|a-c\|_H^2 \big\} + \frac{1}{2} \big\{\|c-b\|_H^2 -\|d-b\|_H^2 \big\},$$
to the right-hand side of \eqref{LEM-M1-1} with $ a=u$, $b=\tilde{u}^k$, $c=u^k$ and $d=u^{k+1}$, we have
\begin{equation}\label{LEM-M1-2}
 (u-\tilde{u}^k)^T H(u^k-u^{k+1}) = \frac{1}{2} \bigl\{ \|u-u^{k+1}\|_H^2  -  \|u-u^k \|_H^2 \bigr\} + \frac{1}{2}\big\{\|u^k -\tilde{u}^k\|_H^2  -  \|u^{k+1}-\tilde{u}^k\|_H^2\big\}.
\end{equation}
For the second part of the right-hand side of \eqref{LEM-M1-2}, because $Q=HM$ and $2u^TQu=u^T(Q^T+Q)u$, we have
\begin{eqnarray}\label{LEM-M1-3}
 \|u^k -\tilde{u}^k\|_H^2 - \|u^{k+1}-\tilde{u}^k\|_H^2 & \stackrel{\eqref{C-P-C}}{=}  &  \|u^k -\tilde{u}^k\|_H^2 - \|(u^k -\tilde{u}^k) - M (u^k -\tilde{u}^k)\|_H^2  \nonumber \\
  & = & 2 (u^k -\tilde{u}^k)^TH M (u^k -\tilde{u}^k) - (u^k -\tilde{u}^k)^TM ^THM (u^k -\tilde{u}^k) \nonumber \\
  & =  & (u^k -\tilde{u}^k)^T ( Q^T +Q - M^THM)(u^k -\tilde{u}^k) \nn \\
  & \stackrel{\eqref{HMQG}}{=}  &   \|u^k-\tilde{u}^k\|_G^2.
\end{eqnarray}
Then, substituting \eqref{LEM-M1-2} and \eqref{LEM-M1-3} into \eqref{LEM-M1-1}, we prove the assertion (\ref{THM-H-A}).
\end{proof}

The right-hand side of (\ref{THM-H-A}) represented by quadratic terms is easier to be operated recursively, and it helps us derive the strict contraction of the sequence $\{u^k\}$ generated by the generalized CP-PPA (\ref{H-Y}) in Theorem \ref{THM-HauptC}.


\begin{theorem} \label{THM-HauptC}
Let $u^k$ be a given vector. The predictor $\tilde{u}^k$ and the corrector  $u^{k+1}$  are represented by  \eqref{C-P-P}  and  \eqref{C-P-C}, respectively. It holds that
\begin{equation}\label{THM-H-C-0}
 \|u^{k+1} - u^*\|_H^2  \le \|u^k  - u^*\|_H^2 -\|u^k -\tilde{u}^k\|_G^2 , \quad  \forall  \; u^* \in \Omega^*,
\end{equation}
where the matrices $H$ and $G$ are defined in \eqref{HMQG}.
\end{theorem}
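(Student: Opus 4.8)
The plan is to specialize the key inequality \eqref{THM-H-A} from Lemma \ref{THM-HauptA} to the case $u = u^* \in \Omega^*$, and then combine it with the variational characterization of the solution set $\Omega^*$ to cancel the troublesome inner-product term. Concretely, setting $u = u^*$ in \eqref{THM-H-A} gives
\[
\theta(u^*) - \theta(\tilde{u}^k) + (u^* - \tilde{u}^k)^T F(u^*) \ge \frac{1}{2}\bigl(\|u^* - u^{k+1}\|_H^2 - \|u^* - u^k\|_H^2\bigr) + \frac{1}{2}\|u^k - \tilde{u}^k\|_G^2.
\]
The left-hand side is exactly the quantity that the VI \eqref{VI-S} controls: since $\tilde{u}^k \in \Omega$ and $u^*$ solves $\hbox{VI}(\Omega, F, \theta)$, we have $\theta(\tilde{u}^k) - \theta(u^*) + (\tilde{u}^k - u^*)^T F(u^*) \ge 0$, i.e. $\theta(u^*) - \theta(\tilde{u}^k) + (u^* - \tilde{u}^k)^T F(u^*) \le 0$. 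Therefore the right-hand side above is $\le 0$, which rearranges immediately to
\[
\|u^{k+1} - u^*\|_H^2 \le \|u^k - u^*\|_H^2 - \|u^k - \tilde{u}^k\|_G^2,
\]
the desired assertion \eqref{THM-H-C-0}.

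The order of steps I would carry out: first invoke Lemma \ref{THM-HauptA} with the particular choice $u = u^*$; second, invoke the definition of $\Omega^*$ as the solution set of the VI \eqref{VI} to get the sign of the left-hand side term $\theta(u) - \theta(u^*) + (u - u^*)^T F(u^*)$ (here using monotonicity of $F$ as in \eqref{EQF} is not even needed, since $F(u^*)$ appears directly); third, chain the two inequalities and transpose the $\|u^k - u^*\|_H^2$ term to the right.

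There is essentially no obstacle here — this theorem is a short corollary of Lemma \ref{THM-HauptA} once one recognizes that the left-hand side of \eqref{THM-H-A} at $u = u^*$ is nonpositive by definition of a VI solution. The only mild subtlety worth spelling out is that Lemma \ref{THM-HauptA} is stated with $F(u)$ (not $F(\tilde{u}^k)$) on the left-hand side, so that plugging $u = u^*$ produces precisely $F(u^*)$, matching the VI inequality \eqref{VI-S} verbatim without any further manipulation; this is why the earlier lemma was deliberately phrased that way. Positive definiteness of $H$ and $G$ from Proposition \ref{propos} is what makes the resulting estimate genuinely useful (it yields a contraction with respect to the $H$-norm), but it is not needed for the formal derivation of \eqref{THM-H-C-0} itself.
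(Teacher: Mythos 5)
Your proposal is correct and follows essentially the same route as the paper: set $u=u^*$ in \eqref{THM-H-A}, use the VI characterization of $u^*\in\Omega^*$ (with $\tilde{u}^k\in\Omega$ as the test point) to see that the left-hand side is nonpositive, and rearrange. Your side remark that monotonicity of $F$ is not needed at this stage—because Lemma \ref{THM-HauptA} already carries $F(u)$ rather than $F(\tilde{u}^k)$—is also accurate.
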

\begin{proof} Setting $u$ in \eqref{THM-H-A} as arbitrary $u^\ast\in\Omega^\ast$, we obtain
$$ \|u^k  - u^*\|_H^2  - \|u^{k+1} -u^*\|_H^2\ge \|u^k -\tilde{u}^k\|_G^2 +  2\big(\theta(\tilde{u}^k)- \theta(u^*)  +(\tilde{u}^k-u^*)^TF(u^\ast)\big).$$
Since $u^*$ is a solution point of the VI (\ref{VI}), we have
$$  \theta(\tilde{u}^k)- \theta(u^*) +  (\tilde{u}^k-u^*)^T F({u}^*)   \ge 0.$$
Thus, it holds that
$$ \|u^k  - u^*\|_H^2  - \|u^{k+1} -u^*\|_H^2 \ge \|u^k -\tilde{u}^k\|_G^2, \quad  \forall  \; u^* \in \Omega^*,$$
and the assertion (\ref{THM-H-C-0}) is proved.
\end{proof}

Now, we are ready to prove the global convergence of the generalized CP-PPA \eqref{H-Y} with the condition (\ref{Assum}).

\begin{theorem} \label{THM-HauptD}
The sequence $\{u^k \}$ generated by the generalized CP-PPA \eqref{H-Y} with the condition (\ref{Assum}) converges to a solution point of the saddle point problem (\ref{Min-Max}).
\end{theorem}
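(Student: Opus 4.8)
The plan is to use Theorem \ref{THM-HauptC} as the engine and run a standard Fej\'er-monotonicity argument. First, by Proposition \ref{propos}, both $H$ and $G$ are positive definite under the condition \eqref{Assum}, so $\|\cdot\|_H$ is a genuine norm. The inequality \eqref{THM-H-C-0} then shows that for any fixed $u^*\in\Omega^*$ the sequence $\{\|u^k-u^*\|_H^2\}$ is monotonically nonincreasing, hence convergent; in particular $\{u^k\}$ is bounded. Summing \eqref{THM-H-C-0} over $k$ yields $\sum_{k=0}^\infty \|u^k-\tilde u^k\|_G^2 \le \|u^0-u^*\|_H^2 < \infty$, so $\|u^k-\tilde u^k\|_G \to 0$, and since $G\succ0$ this gives $u^k-\tilde u^k\to 0$. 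Consequently $\{\tilde u^k\}$ is also bounded and shares all limit points with $\{u^k\}$, and moreover $u^{k+1}-u^k = -M(u^k-\tilde u^k)\to 0$.

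Next I would extract a convergent subsequence $\tilde u^{k_j}\to u^\infty$ (possible by boundedness) and show $u^\infty\in\Omega^*$. For this, pass to the limit in the prediction inequality \eqref{Pred1}: the left-hand side $\theta(u)-\theta(\tilde u^{k_j})+(u-\tilde u^{k_j})^TF(\tilde u^{k_j})$ converges to $\theta(u)-\theta(u^\infty)+(u-u^\infty)^TF(u^\infty)$ using continuity of $F$ (affine) and lower semicontinuity / continuity of $\theta$ on $\Omega$, while the right-hand side $(u-\tilde u^{k_j})^TQ(u^{k_j}-\tilde u^{k_j})\to 0$ because $u^{k_j}-\tilde u^{k_j}\to 0$ and $\{u-\tilde u^{k_j}\}$ is bounded. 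This yields $\theta(u)-\theta(u^\infty)+(u-u^\infty)^TF(u^\infty)\ge 0$ for all $u\in\Omega$, i.e. $u^\infty\in\Omega^*$.

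Finally I would upgrade subsequential convergence to convergence of the whole sequence by the usual trick: apply the contraction \eqref{THM-H-C-0} with the particular solution $u^*=u^\infty$. Since $\{\|u^k-u^\infty\|_H^2\}$ is nonincreasing and has a subsequence $\|u^{k_j}-u^\infty\|_H^2\to 0$ (because $u^{k_j}\to u^\infty$ — here one may need $u^{k_j}\to u^\infty$ rather than just $\tilde u^{k_j}\to u^\infty$, which follows since $u^{k_j}-\tilde u^{k_j}\to0$), the entire sequence $\|u^k-u^\infty\|_H^2\to 0$, hence $u^k\to u^\infty$. Since $u^\infty\in\Omega^*$ corresponds to a saddle point of \eqref{Min-Max} via the VI reformulation \eqref{VI}, this proves the claim.

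The main obstacle is the limit-passage step establishing $u^\infty\in\Omega^*$: one must be careful that $\theta$ is only assumed proper convex (not continuous), so strictly speaking one should either invoke lower semicontinuity of $\theta$ together with the fact that $\tilde u^k\in\Omega$ stays in the effective domain, or — more cleanly — first rewrite \eqref{Pred1} in the equivalent monotone form using $F(\tilde u^k)\equiv F(u)$ up to the bilinear coupling and argue as in the standard VI convergence proofs. Everything else is routine bookkeeping with the $H$- and $G$-norms.
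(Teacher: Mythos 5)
Your proposal is correct and follows essentially the same route as the paper: Fej\'er monotonicity in the $H$-norm from Theorem \ref{THM-HauptC}, summability of $\|u^k-\tilde u^k\|_G^2$ to get $u^k-\tilde u^k\to 0$, passage to the limit in \eqref{Pred1} along a subsequence to identify a cluster point in $\Omega^*$, and the standard upgrade to whole-sequence convergence via the nonincreasing distance to that particular solution. Your side remark about the continuity of $\theta$ in the limit-passage step is a reasonable point of care that the paper glosses over, but it does not change the argument.
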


\begin{proof} It follows from \eqref{THM-H-C-0} that the sequence $\{u^k\}$ is bounded. Summarizing \eqref{THM-H-C-0} over $k=0,1,\ldots,\infty$, we obtain
$$\sum_{k=0}^{\infty}\|u^k -\tilde{u}^k\|_G^2\leq\|u^0 -u^\ast\|_H^2,$$
which implies
\begin{equation}\label{Contrac-uut}
  \lim_{k\to \infty}\|u^k -\tilde{u}^k\|_G=0.
\end{equation}
Therefore, the sequence $\{\tilde{u}^k\}$ is also bounded. Let $u^{\infty}$ be a cluster point of $\{\tilde{u}^k\}$ and $\{\tilde{u}^{k_j}\}$ be a subsequence converging to
$u^{\infty}$. Then, it follows from \eqref{Pred1} that
$$\tilde{u}^{k_j}\in \Omega, \quad \theta(u)-\theta(\tilde{u}^{k_j}) + (u-\tilde{u}^{k_j})^TF(\tilde{u}^{k_j}) \ge (u-\tilde{u}^{k_j})^TQ(u^{k_j}-\tilde{u}^{k_j}), \quad \forall\; u\in \Omega.$$
Since the matrix $Q$ defined in \eqref{QM} is nonsingular, it follows from the
continuity of $\theta(u)$ and $F(u)$ that
$$ u^{\infty}\in \Omega, \quad \theta(u)-\theta(u^{\infty}) + (u- u^{\infty})^T F(u^{\infty}) \ge 0, \quad \forall\; u\in \Omega. $$
This means $u^{\infty}$ is a solution point of the VI \eqref{VI}, which is also a solution point of the saddle point problem (\ref{Min-Max}). Moreover, it follows from \eqref{Contrac-uut} that $\lim_{j\rightarrow\infty}u^{k_j}=u^\infty$. Also, according to \eqref{THM-H-C-0}, we obtain
$$\|u^{k+1} - u^{\infty}\|_H \le \|u^k  - u^{\infty}\|_H,$$
which indicates that the sequence $\{u^k\}$ does not have more than one cluster point. Therefore, we have $\lim_{k\rightarrow\infty}u^k=u^\infty$ and the proof is complete.
\end{proof}

\section{Convergence rate}\label{sec-convrate}

\setcounter{equation}{0}
\setcounter{remark}{0}

In this section, we derive the worst-case $O(1/N)$ convergence rate measured by the iteration complexity for the generalized CP-PPA \eqref{H-Y} in both the ergodic and point-wise senses, where $N$ denotes the iteration counter.

\subsection{Convergence rate in the ergodic sense}

We follow our previous work \cite{HeYuan-SIAM-N} and prove the worst-case $O(1/N)$ convergence rate in the ergodic sense for the generalized CP-PPA \eqref{H-Y}. First of all, recall the characterization of the solution set of the VI  \eqref{VI} given in \cite{FP2003}.

\begin{theorem}
The solution set of the VI \eqref{VI} is convex and it can be characterized as
\begin{equation}\label{VI-FP}
  \Omega^\ast = \bigcap_{u\in\Omega}\big\{\tilde{u}\in\Omega:\; \theta(u) - \theta(\tilde{u}) + (u-\tilde{u})^TF(u)\geq0\big\}.
\end{equation}
\end{theorem}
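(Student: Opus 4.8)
The plan is to establish the asserted identity by double inclusion, and then read off convexity of $\Omega^\ast$ for free. Write $S := \bigcap_{u\in\Omega}\{\tilde u\in\Omega : \theta(u)-\theta(\tilde u)+(u-\tilde u)^TF(u)\ge 0\}$ for the right-hand side of \eqref{VI-FP}. For each fixed $u$, the map $\tilde u\mapsto \theta(u)-\theta(\tilde u)+(u-\tilde u)^TF(u)$ is concave in $\tilde u$ (the term $-\theta(\tilde u)$ is concave since $\theta$ is convex, and the remaining terms are affine in $\tilde u$), so its $0$-superlevel set is convex; intersecting with the convex set $\Omega$ and over all $u\in\Omega$ preserves convexity. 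Hence, once $\Omega^\ast=S$ is proved, convexity of $\Omega^\ast$ is immediate.

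For the inclusion $\Omega^\ast\subseteq S$, let $u^\ast\in\Omega^\ast$. Since $F$ is monotone --- indeed $(u-v)^T(F(u)-F(v))\equiv 0$ by \eqref{EQF} --- we have $(u-u^\ast)^TF(u)\ge (u-u^\ast)^TF(u^\ast)$ for every $u\in\Omega$, so $\theta(u)-\theta(u^\ast)+(u-u^\ast)^TF(u)\ge \theta(u)-\theta(u^\ast)+(u-u^\ast)^TF(u^\ast)\ge 0$ by \eqref{VI-S}. Thus $u^\ast\in S$.

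For the reverse inclusion $S\subseteq\Omega^\ast$, fix $\tilde u\in S$ and an arbitrary $u\in\Omega$; the goal is $\theta(u)-\theta(\tilde u)+(u-\tilde u)^TF(\tilde u)\ge 0$. For $t\in(0,1]$ set $u_t:=(1-t)\tilde u+tu\in\Omega$ by convexity of $\Omega$. Applying the defining inequality of $S$ at the point $u_t$, and using $u_t-\tilde u = t(u-\tilde u)$ together with the convexity estimate $\theta(u_t)-\theta(\tilde u)\le t\bigl(\theta(u)-\theta(\tilde u)\bigr)$, we obtain $t\bigl(\theta(u)-\theta(\tilde u)\bigr)+t(u-\tilde u)^TF(u_t)\ge 0$; dividing by $t>0$ and letting $t\to 0^+$, the continuity of the affine map $F$ gives $F(u_t)\to F(\tilde u)$ and hence $\theta(u)-\theta(\tilde u)+(u-\tilde u)^TF(\tilde u)\ge 0$. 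As $u\in\Omega$ was arbitrary, $\tilde u\in\Omega^\ast$.

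This is the classical passage between the Minty and Stampacchia forms of a monotone variational inequality, so there is no genuine obstacle; the only points needing care are that $\theta$ is convex (for the segment estimate on $\theta$), that $\Omega$ is convex (so $u_t$ stays feasible), and that $F$ is continuous (so the limit $t\to 0^+$ is legitimate) --- all of which hold here because $F$ in \eqref{Notation-uFO} is affine and $f,g$ are convex.
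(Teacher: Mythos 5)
Your proof is correct, and it is essentially the argument the paper relies on: the paper itself only cites Theorem 2.3.5 of Facchinei--Pang and Theorem 2.1 of He--Yuan, and the standard proof there is exactly your Minty-type double inclusion (monotonicity of $F$ for $\Omega^\ast\subseteq S$, the segment $u_t=(1-t)\tilde u+tu$ with convexity of $\theta$ and continuity of the affine $F$ for $S\subseteq\Omega^\ast$, and convexity of $\Omega^\ast$ read off as an intersection of convex superlevel sets). All the details check out, including your observation that here monotonicity holds with equality via \eqref{EQF}.
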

\begin{proof}
See the proof of Theorem 2.3.5 in \cite{FP2003}, or Theorem 2.1 in \cite{HeYuan-SIAM-N}.
\end{proof}

Accordingly,  for a given accuracy $\epsilon>0$, $\tilde{u}\in\Omega$ is called an $\epsilon$-approximate solution point of the VI \eqref{VI} if it satisfies
$$\theta(\tilde{u}) - \theta(u) + (\tilde{u}-u)^TF(u)\leq\epsilon,\quad \forall \; u\in \mathcal{D}_{(\tilde{u})},$$
where $\mathcal{D}_{(\tilde{u})} = \big\{u\in\Omega\;|\;\|u-\tilde{u}\|\leq1\big\}$. Now, we show that,  after $N$ iterates generated by the generalized CP-PPA (\ref{H-Y}), we can find a point $\tilde{u}$ such that
\begin{equation}\label{VI-FP-solution}
\tilde{u}\in\Omega \quad \hbox{and}\quad  \sup_{u\in\mathcal{D}_{(\tilde{u})}}\big\{ \theta(\tilde{u}) - \theta(u) + (\tilde{u}-u)^TF(u)\big\}\leq \epsilon:=O(1/N).
\end{equation}
\begin{theorem}\label{crateth1}
Let $\{u^k\}$ be the sequence generated by the generalized CP-PPA (\ref{H-Y}) with the condition \eqref{Assum}. The predictor $\tilde{u}^k$ and the corrector  $u^{k+1}$  are represented by \eqref{C-P-P} and \eqref{C-P-C}, respectively. For any integer $N>0$, we define
\begin{equation}\label{averagep}
{\bar{u}_N} = \frac{1}{N+1} \sum_{k=0}^N \tilde{u}^k.
\end{equation}
Then, it holds that
\begin{equation}
{\bar{u}_N}\in\Omega,\quad  \theta(\bar{u}_N) -\theta(u)  + (\bar{u}_{N}-u)^T F(u) \le \frac{1}{2(N+1)}\|u-u^0\|_H^2, \quad \forall \;u\in\Omega.
\end{equation}
\end{theorem}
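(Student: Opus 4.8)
The plan is to run the standard "telescoping plus Jensen" argument built on Lemma \ref{THM-HauptA}. Starting from the inequality \eqref{THM-H-A}, I would first discard the term $\tfrac12\|u^k-\tilde u^k\|_G^2$ on its right-hand side; this is legitimate since Proposition \ref{propos} guarantees $G\succ0$ under the condition \eqref{Assum}. Rearranging the resulting inequality (and using $-(u-\tilde u^k)^TF(u)=(\tilde u^k-u)^TF(u)$) gives
$$\theta(\tilde u^k)-\theta(u)+(\tilde u^k-u)^TF(u)\ \le\ \tfrac12\bigl(\|u-u^k\|_H^2-\|u-u^{k+1}\|_H^2\bigr),\qquad\forall\,u\in\Omega.$$
Summing this over $k=0,1,\dots,N$ telescopes the right-hand side to $\tfrac12\bigl(\|u-u^0\|_H^2-\|u-u^{N+1}\|_H^2\bigr)$, which is $\le\tfrac12\|u-u^0\|_H^2$ because $H\succ0$ (again Proposition \ref{propos}).

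Next I would pass from this bound on the sum to the corresponding bound at the average $\bar u_N$ defined in \eqref{averagep}. The key observation is that, for any fixed $u$, the map $w\mapsto \theta(w)-\theta(u)+(w-u)^TF(u)$ is convex in $w$: $\theta$ is convex, and because $F$ in \eqref{Notation-uFO} is linear in its argument, the term $(w-u)^TF(u)$ is linear (hence convex) in $w$. Therefore Jensen's inequality applied to $\bar u_N=\frac{1}{N+1}\sum_{k=0}^N\tilde u^k$ yields
$$\theta(\bar u_N)-\theta(u)+(\bar u_N-u)^TF(u)\ \le\ \frac{1}{N+1}\sum_{k=0}^N\Bigl(\theta(\tilde u^k)-\theta(u)+(\tilde u^k-u)^TF(u)\Bigr)\ \le\ \frac{1}{2(N+1)}\|u-u^0\|_H^2.$$
Finally, $\bar u_N\in\Omega$ since $\Omega={\cal X}\times{\cal Y}$ is convex and each predictor $\tilde u^k\in\Omega$ by \eqref{Pred1}. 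Combining these facts is exactly the assertion of the theorem.

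I do not anticipate a genuine obstacle: telescoping and the bound by $\|u-u^0\|_H^2$ are routine once $G,H\succ0$ are in hand from Proposition \ref{propos}. The only step that requires care is the passage from the summed inequality, which involves the individual predictors $\tilde u^k$, to an inequality evaluated at their average $\bar u_N$; this is precisely where both the convexity of $\theta$ and the linearity of $F$ are used, and it is the single nontrivial ingredient beyond the bookkeeping.
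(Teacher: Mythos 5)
Your proposal is correct and follows essentially the same route as the paper's proof: drop the $G$-term using Proposition \ref{propos}, telescope the $H$-norm differences over $k=0,\dots,N$, and pass to the average $\bar u_N$ via convexity of $\theta$ (the linear term $(\,\cdot\,-u)^TF(u)$ passing through the average exactly). No gaps.
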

\begin{proof}
It follows from the inequality \eqref{THM-H-A} and the positive definiteness of the matrix $G$ that
\begin{equation}\label{crate1}
 \theta(\tilde{u}^k) -  \theta(u) + (\tilde{u}^k-u)^TF(u) \leq  \frac{1}{2}\bigl\{\|u-u^k \|_H^2 - \|u-u^{k+1}\|_H^2 \bigr\}, \quad  \forall \; u\in \Omega.
\end{equation}
Adding \eqref{crate1} over $k=0,1,\ldots,N$, we get
$$
  \sum_{k=0}^N\theta(\tilde{u}^k)  - (N+1)\theta(u) + \Big(\sum_{k=0}^N\tilde{u}^k- (N+1)u\Big)^TF(u) \leq  \frac{1}{2}\|u-u^0 \|_H^2, \quad  \forall \; u\in \Omega.
$$
Using the notation ${\bar{u}_N}$ defined in \eqref{averagep}, the above inequality can be rewritten as
\begin{equation}\label{crate2}
  \frac{1}{N+1}\sum_{k=0}^N\theta(\tilde{u}^k)  - \theta(u) + ({\bar{u}_N}- u)^TF(u) \leq  \frac{1}{2(N+1)}\|u-u^0 \|_H^2, \quad  \forall \; u\in \Omega.
\end{equation}
Since $\mathcal{X}$ and $\mathcal{Y}$ are convex sets, and $\tilde{u}^k\in\Omega$ for all $k\geq0$, we have ${\bar{u}_N}\in\Omega$.  On the other hand, $\theta$ is convex and thus we have
\begin{equation}\label{crate3}
   \theta({\bar{u}_N})\leq\frac{1}{N+1}\sum_{k=0}^N\theta(\tilde{u}^k).
\end{equation}
Substituting \eqref{crate3} into \eqref{crate2}, the assertion of this theorem follows immediately.
\end{proof}


Then, it follows from Theorem \ref{crateth1} that, with the first $N$ iterates generated by the generalized CP-PPA \eqref{H-Y},  the point $\bar{u}_N$  defined in \eqref{averagep} satisfies
$$\bar{u}_N\in\Omega \quad \hbox{and} \quad \sup_{u\in\mathcal{D}_{(\bar{u}_N)}}\big\{ \theta(\bar{u}_N) - \theta(u) + (\bar{u}_N-u)^TF(u)\big\}\leq \frac{c}{2(N+1)}=O(1/N),$$
where $\mathcal{D}_{(\bar{u}_N)} = \big\{u\in\Omega\;|\;\|u-\bar{u}_N\|\leq1\big\}$ and $c=\sup\{\|u-u^0\|_H^2 \;|\;u\in\mathcal{D}_{(\bar{u}_N)}\}$. The worst-case $O(1/N)$ convergence rate is thus established for the generalized CP-PPA (\ref{H-Y}) in the ergodic sense.

\subsection{Convergence rate in a point-wise sense}

Now, we follow our previous work \cite{HY-NM} and derive the worst-case $O(1/N)$ convergence rate in a point-wise sense for the generalized CP-PPA \eqref{H-Y}. The following lemma shows certain monotonicity of the sequence $\{\|M(u^k-\tilde{u}^k)\|_H^2\}$, and it helps us operate these terms recursively.

\begin{lemma}
Let $\{u^k\}$ be the sequence generated by the generalized CP-PPA (\ref{H-Y}) with the condition (\ref{Assum}) and recall its prediction-correction representation \eqref{C-P-P}-\eqref{C-P-C}.  For any integer $k\geq0$, it holds that
\begin{equation}\label{point-2}
 \|M(u^{k+1}-\tilde{u}^{k+1})\|_H^2\leq \|M(u^k-\tilde{u}^k)\|_H^2.
\end{equation}
\end{lemma}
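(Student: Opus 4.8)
The plan is to derive the monotonicity \eqref{point-2} from the key inequality \eqref{LEM-M1-1} by applying it at two consecutive indices $k$ and $k+1$ and combining them. First I would set, for convenience, $d^k := u^k - \tilde{u}^k$, so that the correction step \eqref{C-P-C} reads $u^{k+1} = u^k - Md^k$, and \eqref{LEM-M1-1} becomes
$$\theta(u) - \theta(\tilde{u}^k) + (u - \tilde{u}^k)^T F(u) \ge (u-\tilde{u}^k)^T H M d^k = (u - \tilde{u}^k)^T Q d^k, \quad \forall\, u \in \Omega,$$
using $Q = HM$. Writing this inequality for index $k$ with the test point $u = \tilde{u}^{k+1}$, and for index $k+1$ with the test point $u = \tilde{u}^k$, and then adding the two, the $\theta$-terms cancel and the monotonicity of $F$ (see \eqref{EQF}) kills the $F$-terms as well, leaving
$$(\tilde{u}^k - \tilde{u}^{k+1})^T \big( Q d^k - Q d^{k+1} \big) \ge 0, \quad \text{i.e.} \quad (\tilde{u}^k - \tilde{u}^{k+1})^T Q (d^k - d^{k+1}) \ge 0.$$

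Next I would rewrite $\tilde{u}^k - \tilde{u}^{k+1}$ in terms of the $d$'s and the correction step. Since $u^{k+1} = u^k - Md^k$, we have $\tilde u^k - \tilde u^{k+1} = (u^k - d^k) - (u^{k+1} - d^{k+1}) = (u^k - u^{k+1}) - d^k + d^{k+1} = Md^k - d^k + d^{k+1} = d^{k+1} - (I - M)d^k$. Substituting this into the inequality above gives
$$\big( d^{k+1} - (I-M)d^k \big)^T Q (d^k - d^{k+1}) \ge 0.$$
The goal \eqref{point-2} in terms of $d$ is $\|Md^{k+1}\|_H^2 \le \|Md^k\|_H^2$, i.e. $(d^{k+1})^T M^T H M d^{k+1} \le (d^k)^T M^T H M d^k$. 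The standard route (as in \cite{HY-NM}) is the algebraic identity
$$\|Md^k\|_H^2 - \|Md^{k+1}\|_H^2 = \|M(d^k - d^{k+1})\|_H^2 + 2(d^k - d^{k+1})^T M^T H M d^{k+1},$$
and then one shows the right-hand side is nonnegative. Using $M^T H M = M^T Q$ (since $H = QM^{-1}$ gives $HM = Q$, hence $M^THM = M^TQ$), the cross term is $2(d^k - d^{k+1})^T M^T Q d^{k+1}$; I would relate this to the inequality obtained in the previous paragraph. The cleanest way is to add the term $2(d^k-d^{k+1})^T Q^T (d^k - d^{k+1})$, which by monotonicity/skew structure of $Q$ equals $(d^k-d^{k+1})^T(Q^T+Q)(d^k-d^{k+1}) = \|d^k-d^{k+1}\|_G^2 + \|M(d^k-d^{k+1})\|_H^2$ type manipulations — more precisely I expect to use $(d^k - d^{k+1})^T(Q^T+Q-M^THM)(d^k-d^{k+1}) = \|d^k-d^{k+1}\|_G^2 \ge 0$ by Proposition~\ref{propos}.

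Putting the pieces together: from $\big(d^{k+1} - (I-M)d^k\big)^T Q(d^k - d^{k+1}) \ge 0$ I would expand, isolate the term $(d^k-d^{k+1})^T M^T Q d^{k+1}$ that appears in the identity, and bound the leftover terms using $G \succeq 0$ and $H \succeq 0$; this should yield $\|Md^k\|_H^2 - \|Md^{k+1}\|_H^2 \ge 0$, which is exactly \eqref{point-2}. The main obstacle I anticipate is the bookkeeping in matching the cross term $2(d^k-d^{k+1})^T M^T Q d^{k+1}$ produced by the $H$-norm identity with the quantity $(d^{k+1}-(I-M)d^k)^T Q(d^k-d^{k+1})$ coming from the two VI inequalities — i.e., showing that their difference is a $G$-weighted square (hence nonnegative). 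This is a finite, deterministic linear-algebra reconciliation using only $Q = HM$, $G = Q^T+Q-M^THM$, and the positive semidefiniteness established in Proposition~\ref{propos}, so no new idea is needed beyond careful substitution; I would present it by first stating the identity, then the two consecutive-index inequalities, then the one-line combination.
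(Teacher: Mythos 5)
Your proposal is correct and follows essentially the same route as the paper's proof: applying the prediction inequality \eqref{Pred1} at iterations $k$ and $k+1$ with the test points $\tilde{u}^{k+1}$ and $\tilde{u}^k$ swapped, adding and invoking the monotonicity identity \eqref{EQF} to obtain $(\tilde{u}^k-\tilde{u}^{k+1})^TQ\{(u^k-\tilde{u}^k)-(u^{k+1}-\tilde{u}^{k+1})\}\ge 0$, and then reconciling this with an $H$-norm expansion of $\|M(u^k-\tilde{u}^k)\|_H^2-\|M(u^{k+1}-\tilde{u}^{k+1})\|_H^2$ so that the difference reduces to $\|(u^k-\tilde{u}^k)-(u^{k+1}-\tilde{u}^{k+1})\|_G^2\ge 0$ via Proposition \ref{propos}. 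The only cosmetic difference is which form of the $H$-norm identity you expand (around $Md^{k+1}$ rather than $Md^k$ as in \eqref{Bequaliy}); the bookkeeping you defer does close exactly as you anticipate.
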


\begin{proof}
Utilizing the identity $\|a\|_H^2-\|b\|_H^2=2a^TH(a-b)-\|a-b\|_H^2$
with $a=M(u^k-\tilde{u}^k)$ and $b=M(u^{k+1}-\tilde{u}^{k+1})$, we have
\begin{equation}\label{Bequaliy}
\begin{aligned}
& \|M(u^k-\tilde{u}^k)\|_H^2-\|M(u^{k+1}-\tilde{u}^{k+1})\|_H^2\\[0.1cm]
& \; = 2(u^k-\tilde{u}^k)^TM^THM\{(u^k-\tilde{u}^k)-(u^{k+1}-\tilde{u}^{k+1})\}-\|M\{(u^k-\tilde{u}^k)-(u^{k+1}-\tilde{u}^{k+1})\}\|_H^2.
\end{aligned}
\end{equation}
Let us first bound the crossing term in the right-hand side of \eqref{Bequaliy} by a quadratic term.
More specifically, setting $u=\tilde{u}^{k+1}$ in  \eqref{Pred1}, we have
\begin{equation}\label{CR1}
  \theta(\tilde{u}^{k+1})-\theta(\tilde{u}^k)+(\tilde{u}^{k+1}-\tilde{u}^k)^TF(\tilde{u}^k)\geq(\tilde{u}^{k+1}-\tilde{u}^k)^TQ(u^k-\tilde{u}^k).
\end{equation}
Rewrite the inequality \eqref{Pred1} for the $(k+1)$-th iteration and obtain
\begin{equation}\label{CR2}
  \theta(u)-\theta(\tilde{u}^{k+1})+(u-\tilde{u}^{k+1})^TF(\tilde{u}^{k+1})\geq(u-\tilde{u}^{k+1})^TQ(u^{k+1}-\tilde{u}^{k+1}),\quad \forall \; u\in\Omega.
\end{equation}
Setting $u=\tilde{u}^k$ in \eqref{CR2}, we get
\begin{equation}\label{CR3}
  \theta(\tilde{u}^{k})-\theta(\tilde{u}^{k+1})+(\tilde{u}^{k}-\tilde{u}^{k+1})^TF(\tilde{u}^{k+1})\geq(\tilde{u}^{k}-\tilde{u}^{k+1})^TQ(u^{k+1}-\tilde{u}^{k+1}).
\end{equation}
Adding \eqref{CR1} and \eqref{CR3},  and using the monotonicity of $F$  (see \eqref{EQF}), we obtain
\begin{equation}\label{noneg1}
(\tilde{u}^{k}-\tilde{u}^{k+1})^TQ\{(u^k-\tilde{u}^k)-(u^{k+1}-\tilde{u}^{k+1})\}\geq0.
\end{equation}
Moreover, adding the term
$
\{(u^k-\tilde{u}^k)-(u^{k+1}-\tilde{u}^{k+1})\}^TQ\{(u^k-\tilde{u}^k)-(u^{k+1}-\tilde{u}^{k+1})\}
$
to both sides of \eqref{noneg1} and using $u^TQu=\frac{1}{2}u^T(Q^T+Q)u$, we have
\begin{equation}\label{noneg2}
(u^k-u^{k+1})^TQ\{(u^k-\tilde{u}^k)-(u^{k+1}-\tilde{u}^{k+1})\}\geq\frac{1}{2}\|(u^k-\tilde{u}^k)-(u^{k+1}-\tilde{u}^{k+1})\|_{Q^T+Q}^2.
\end{equation}
Note that the left-hand side of \eqref{noneg2} can be rewritten as
\begin{eqnarray*}
 \lefteqn{(u^k-u^{k+1})^TQ\{(u^k-\tilde{u}^k)-(u^{k+1}-\tilde{u}^{k+1})\}} \\
   &\overset{\eqref{C-P-C}}{=}& \{M(u^k-\tilde{u}^k)\}^TQ\{(u^k-\tilde{u}^k)-(u^{k+1}-\tilde{u}^{k+1})\} \\
   &\overset{\eqref{HMQG}}{=}& (u^k-\tilde{u}^k)^TM^THM\{(u^k-\tilde{u}^k)-(u^{k+1}-\tilde{u}^{k+1})\}.
\end{eqnarray*}
Thus, we obtain
\begin{equation}\label{point-1}
  2(u^k-\tilde{u}^k)^TM^THM\{(u^k-\tilde{u}^k)-(u^{k+1}-\tilde{u}^{k+1})\}\geq \|(u^k-\tilde{u}^k)-(u^{k+1}-\tilde{u}^{k+1})\|_{Q^T+Q}^2.
\end{equation}
Furthermore, substituting \eqref{point-1} into \eqref{Bequaliy}, we have
\begin{eqnarray*}
  \lefteqn{\|M(u^k-\tilde{u}^k)\|_H^2-\|M(u^{k+1}-\tilde{u}^{k+1})\|_H^2} \\
   &=& 2(u^k-\tilde{u}^k)^TM^THM\{(u^k-\tilde{u}^k)-(u^{k+1}-\tilde{u}^{k+1})\}  -\|M\{(u^k-\tilde{u}^k)-(u^{k+1}-\tilde{u}^{k+1})\}\|_H^2 \\
   &\overset{\eqref{point-1}}{\geq}& \|(u^k-\tilde{u}^k)-(u^{k+1}-\tilde{u}^{k+1})\|_{Q^T+Q}^2-\|M\{(u^k-\tilde{u}^k)-(u^{k+1}-\tilde{u}^{k+1})\}\|_H^2 \\
   &=& \|(u^k-\tilde{u}^k)-(u^{k+1}-\tilde{u}^{k+1})\|_{G}^2,
\end{eqnarray*}
where $G$ is defined in \eqref{HMQG}. Recall that $G$ is positive definite under the condition \eqref{Assum} (see Proposition \ref{propos}). The assertion of this lemma is proved.
\end{proof}

Now, we are ready to derive the $O(1/N)$ convergence rate in a point-wise sense for the generalized CP-PPA \eqref{H-Y}.
\begin{theorem}\label{last-theorem}
Let $\{u^k\}$ be the sequence generated by the generalized CP-PPA (\ref{H-Y}) with the condition (\ref{Assum}) and recall its prediction-correction representation \eqref{C-P-P}-\eqref{C-P-C}. Then, for any positive integer $N$,  we have
\begin{equation}\label{keyin1}
  \|M(u^N-\tilde{u}^N)\|_H^2\leq\frac{1}{(N+1)c_0}\|u^0-u^\ast\|_H^2, \quad  \forall  \; u^* \in \Omega^*,
\end{equation}
where $c_0>0$ is a constant independent of $N$.
\end{theorem}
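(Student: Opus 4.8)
The plan is to combine the two facts already established: the contraction inequality \eqref{THM-H-C-0} from Theorem \ref{THM-HauptC}, which controls the sum $\sum_{k=0}^{N}\|u^k-\tilde u^k\|_G^2$ by $\|u^0-u^\ast\|_H^2$, and the monotonicity inequality \eqref{point-2}, which says the sequence $\{\|M(u^k-\tilde u^k)\|_H^2\}$ is nonincreasing. Since $H=QM^{-1}$, we have $\|M(u^k-\tilde u^k)\|_H^2 = (u^k-\tilde u^k)^TM^THM(u^k-\tilde u^k)$, and the monotonicity lets us write $(N+1)\,\|M(u^N-\tilde u^N)\|_H^2 \le \sum_{k=0}^{N}\|M(u^k-\tilde u^k)\|_H^2$.

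First I would relate the quadratic form $M^THM$ to $G$. Both $M^THM$ and $G$ are symmetric positive definite under condition \eqref{Assum}: $G$ is shown positive definite in Proposition \ref{propos}, and $M^THM = M^TQ$ is positive definite because $C^T(M^THM)C$ (with the same $C$ as in the proof of Proposition \ref{propos}, or a direct congruence) reduces to a block-diagonal positive definite matrix under \eqref{Assum} — alternatively $M^THM = \tfrac12(Q^T+Q) - \tfrac12 G + \tfrac12 G$ type manipulations, but the cleanest route is a direct congruence computation showing $M^THM\succ0$. Because both are positive definite, there exists a constant $c_0>0$ (depending only on $r$, $s$, $A$, $\alpha$, not on $N$ or $k$) such that $M^THM \succeq c_0\, G$, i.e. $\|M(u^k-\tilde u^k)\|_H^2 \ge c_0\,\|u^k-\tilde u^k\|_G^2$ for all $k$. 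Equivalently one may take $c_0 = \lambda_{\min}(G^{-1/2}M^THMG^{-1/2})$, or more simply $c_0 = \lambda_{\min}(M^THM)/\lambda_{\max}(G)$; any such choice works.

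Then I would chain the estimates: using monotonicity \eqref{point-2} first, then $M^THM\succeq c_0 G$, then Theorem \ref{THM-HauptC},
\begin{equation*}
(N+1)\,\|M(u^N-\tilde u^N)\|_H^2 \le \sum_{k=0}^{N}\|M(u^k-\tilde u^k)\|_H^2 \ge\!\!\!\!\!\!\phantom{x} \ c_0\sum_{k=0}^{N}\|u^k-\tilde u^k\|_G^2 \le c_0\,\|u^0-u^\ast\|_H^2 .
\end{equation*}
(In the write-up I would of course split this into two clean display lines to get the inequality directions right: $(N+1)c_0\|M(u^N-\tilde u^N)\|_H^2 \le c_0\sum_{k}\|M(u^k-\tilde u^k)\|_H^2$ is the wrong direction, so the correct chain is $(N+1)\|M(u^N-\tilde u^N)\|_H^2 \le \sum_k \|M(u^k-\tilde u^k)\|_H^2$ and separately $c_0\sum_k\|u^k-\tilde u^k\|_G^2 \le \sum_k\|M(u^k-\tilde u^k)\|_H^2 \le \|u^0-u^\ast\|_H^2$ is also the wrong direction — so one actually wants the reverse bound $\|M(u^k-\tilde u^k)\|_H^2\le \tfrac{1}{c_0}$ times something.) The right bookkeeping: set $c_0$ so that $\|M(u^k-\tilde u^k)\|_H^2 \le \tfrac{1}{c_0}\|u^k-\tilde u^k\|_G^2$ is false in general; instead use that $u^k-\tilde u^k$ appears on both sides and $\|M w\|_H^2 = w^TM^THMw$ while $\|w\|_G^2 = w^TGw$, and since both forms are equivalent norms there is $c_0>0$ with $\|Mw\|_H^2 \le \tfrac{1}{c_0}\,\|w\|_G^2$; hence $(N+1)\|M(u^N-\tilde u^N)\|_H^2 \le \sum_k \|M(u^k-\tilde u^k)\|_H^2 \le \tfrac{1}{c_0}\sum_k\|u^k-\tilde u^k\|_G^2 \le \tfrac{1}{c_0}\|u^0-u^\ast\|_H^2$, which gives exactly \eqref{keyin1}.

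The only real obstacle is choosing the constant $c_0$ correctly and getting every inequality pointing the right way; the underlying mathematics is just "two positive definite quadratic forms on the same finite-dimensional space are comparable," plus telescoping. I expect the proof to be short: invoke \eqref{point-2} to pull out the factor $N+1$, invoke the norm-equivalence between $\|M\cdot\|_H$ and $\|\cdot\|_G$ (justified by Proposition \ref{propos} and the positive definiteness of $M^THM=M^TQ$) to get the constant $c_0$, and invoke Theorem \ref{THM-HauptC} summed over $k=0,\dots,N-1$ (or $0,\dots,\infty$) to bound the telescoped sum by $\|u^0-u^\ast\|_H^2$. No genuinely hard step is involved; care with constants is the main thing.
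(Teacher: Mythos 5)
Your final argument is correct and is essentially the paper's own proof: the paper likewise writes the contraction inequality of Theorem \ref{THM-HauptC} with the $G$-norm replaced by $c_0\|M(u^k-\tilde{u}^k)\|_H^2$ via equivalence of the two positive definite quadratic forms (Proposition \ref{propos} guarantees $G\succ0$, and $M^THM=M^TQ\succ0$ since $H\succ0$ and $M$ is nonsingular), sums over $k=0,\dots,N$, and then uses the monotonicity \eqref{point-2} to extract the factor $N+1$. The only caveat is presentational: your first two attempts state the comparison in the wrong direction ($M^THM\succeq c_0G$), but you explicitly catch and fix this, and the final chain $(N+1)\|M(u^N-\tilde u^N)\|_H^2\le\sum_{k=0}^{N}\|M(u^k-\tilde u^k)\|_H^2\le\frac{1}{c_0}\sum_{k=0}^{N}\|u^k-\tilde u^k\|_G^2\le\frac{1}{c_0}\|u^0-u^\ast\|_H^2$ with $c_0=\lambda_{\min}(G)/\lambda_{\max}(M^THM)$ is exactly what is needed.
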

\begin{proof}
Recall Theorem \ref{THM-HauptC}. Because of the equivalence of different norms, there exists a constant $c_0>0$ such that
\begin{equation}\label{THM-H-C-01}
  \|u^{k+1} - u^*\|_H^2 \le \|u^k  - u^*\|_H^2 - c_0\|M(u^k -\tilde{u}^k)\|_H^2, \quad  \forall  \; u^* \in \Omega^*.
\end{equation}
Summarizing \eqref{THM-H-C-01} over $k=0,\ldots,N$, we have
$$\sum_{k=0}^{N}c_0\|M(u^k-\tilde{u}^k)\|_H^2\leq\|u^0-u^\ast\|_H^2, \quad  \forall  \; u^* \in \Omega^*.$$
It follows from \eqref{point-2} that the sequence $\{\|M(u^k-\tilde{u}^k)\|_H^2\}$ is monotonically non-increasing. We thus have
$$(N+1)c_0\|M(u^N-\tilde{u}^N)\|_H^2\leq\sum_{k=0}^{N}c_0\|M(u^k-\tilde{u}^k)\|_H^2\leq\|u^0-u^\ast\|_H^2, \quad  \forall  \; u^* \in \Omega^*,$$
which leads to the assertion of this theorem immediately.
\end{proof}
Let $d:=\inf\{\|u^0-u^\ast\|_H^2 \;|\; u^* \in \Omega^*\}$. It follows from Theorem \ref{last-theorem} that
$$
\|M(u^N-\tilde{u}^N)\|_H^2\leq\frac{d}{(N+1)c_0}=O(1/N).
$$
Recall the inequality \eqref{Pred1} and the fact $Q=HM$. Then, $\tilde{u}^k$ is a solution point of the VI \eqref{VI} if and only if $\|M(u^k-\tilde{u}^k)\|_H^2=0$. Hence, the assertion (\ref{keyin1}) indicates the worst-case $O(1/N)$ convergence rate in a point-wise sense for the generalized CP-PPA \eqref{H-Y}.

\begin{remark}
Recall \eqref{C-P-C} and we have $u^k-u^{k+1}=M(u^k-\tilde{u}^k)$. The assertion (\ref{keyin1}) also implies that $\|u^k-u^{k+1}\|<\epsilon$ can be used as a stopping criterion for implementing the generalized CP-PPA \eqref{H-Y}, where $\epsilon>0$ denotes the error tolerance.
\end{remark}

\section{Numerical experiments}\label{Sec-num}

\setcounter{equation}{0}
\setcounter{remark}{0}

In this section, we test some specific applications of the saddle point problem (\ref{Min-Max}) and report the numerical results to verify our theoretical assertions. In particular, we verify that the improved condition (\ref{Assum}) leads to better numerical performance for the CP-PPA (\ref{C-P}). The acceleration is moderately but universally effective for a class of problems in form of (\ref{Problem-LC}). We particularly focus on the best choice of $\alpha=0.5$ and compare it with the original choice of $\alpha=1$, i.e., (\ref{Assum-CP}). That is, we mainly compare the numerical difference of the CP-PPA (\ref{C-P}) with the conditions (\ref{Assum-CP}) and (\ref{Assum-0.75}). Our codes were written in Python 3.9 and they were implemented in a Lenovo laptop with  2.20 GHz Intel Core i7-8750H CPU and 16 GB memory.

\subsection{Basis pursuit}

We first consider the basis pursuit problem
\begin{equation}\label{BP}
  \min \big\{\|x\|_1 \;|\; Ax=b, \; x\in\Re^n\big\},
\end{equation}
where $\|x\|_1=\sum_{i=1}^n|x_i|$, $A\in\Re^{m\times n}$ ($m<n$) is a given data matrix, and $b\in\Re^m$. The basis pursuit problem \eqref{BP} plays a fundamental role in various areas such as compressed sensing, signal processing and statistical learning. We refer to, e.g., \cite{BD2009,Chen}, for some survey papers.

When the CP-PPA \eqref{C-P} is applied to \eqref{BP}, the resulting scheme is
\begin{equation}\label{CP-BP}
  \left\{
    \begin{array}{cll}
      x^{k+1} &=& \arg\min\big\{ \|x\|_1 + \frac{r}{2}\|x-(x^k+\frac{1}{r}A^Ty^k)\|_2^2  \;|\; x\in\Re^n\big\}, \\[0.2cm]
      y^{k+1} &=& y^k-\frac{1}{s}\big[A(2x^{k+1}-x^k)-b\big].
    \end{array}
  \right.
\end{equation}
Note that the $x$-subproblem in (\ref{CP-BP}) has a closed-form solution, and it is represented by the shrinkage operator defined in \cite{Chen}. Recall that the CP-PPA (\ref{C-P}) and the generalized CP-PPA (\ref{H-Y}) coincide for the model (\ref{BP}), and our purpose of testing (\ref{BP}) is to verify that the condition (\ref{Assum}) can result in numerical acceleration.

To simulate, we follow some standard way (e.g., as elucidated on \url{https://web.stanford.edu/~boyd/papers/admm/basis_pursuit/basis_pursuit_example.html}) to generate $x^\ast\in\Re^n$ randomly and $s$ coordinates of it are drawn from the uniform distribution in $[-10,10]$ and the rest are zeros, and then generate the matrix $A\in\Re^{m\times n}$ whose entries satisfying the normal distribution $\mathcal{N}(0,1)$. We set $b=Ax^\ast$, and use $x^0=\textbf{0}$ and $y^0=\textbf{0}$ as the initial point. Moreover, we take $m=n/4$ and set the sparse parameter $s=n/20$. The stopping criterion for (\ref{CP-BP}) is
$$\|u^k-u^{k-1}\| = \sqrt{\|x^k-x^{k-1}\|^2+\|y^k-y^{k-1}\|^2}<10^{-9}.$$
Since our focus is not discussing how to tune the parameters $r$ and $s$ empirically for a specific problem, but verifying acceleration of the improved theoretical lower bound of $r\cdot s$, we fix the mechanism of determining $r$ and $s$ individually (which is also experimentally probed) and test the numerical difference with different lower bound of $r \cdot s$. More precisely, they are chosen as follows subject to the only difference of the constant $1-\alpha+\alpha^2$.
\begin{itemize}
  \item CP-PPA with (\ref{Assum-CP}): $r=\sqrt{\rho(A^T\!A)}/10$ and $s=10\sqrt{\rho(A^T\!A)}$.
  \item CP-PPA with (\ref{Assum}): $r=\sqrt{(1-\alpha+\alpha^2)\rho(A^T\!A)}/10$ and $s=10\sqrt{(1-\alpha+\alpha^2)\rho(A^T\!A)}$.
  \item CP-PPA with (\ref{Assum-0.75}): $r=\sqrt{0.75\rho(A^T\!A)}/10$ and $s=10\sqrt{0.75\rho(A^T\!A)}$.
\end{itemize}

In Table \ref{Ta1}, for different cases of $n$, the iteration number (``It."), computing time in seconds (``CPU"), the objective function value (``$\|x\|_1$"), and the violation of constraints (``$\|Ax^k-b\|$") are reported. The moderate acceleration of the optimal condition (\ref{Assum-0.75}) over (\ref{Assum-CP}) is clearly seen in Table \ref{Ta1}. To see the acceleration of other values of $\alpha$ in (\ref{Assum}), we plot the iteration numbers with respect to some values of $\alpha \in [0,1]$ for the cases of $n=200$ and $n=2000$ in Figure \ref{fig0}. From the plotted curves, the optimality of $\alpha=0.5$ and the acceleration of other values near $0.5$ are well demonstrated.

\begin{table}[H]
\caption{Improvement of (\ref{Assum-0.75}) for CP-PPA \eqref{C-P}.}
 \centering
  \setlength{\tabcolsep}{1.8mm}{
 \begin{tabular}{l ccccccc ccc}
  \toprule
 \multirow{2}{*}{$n$} & \multirow{2}{*}{$\rho(A^T\!A)$} & \multicolumn{4}{c}{CP-PPA with (\ref{Assum-CP})} &   \multicolumn{4}{c}{CP-PPA with (\ref{Assum-0.75})} & \cr \cmidrule(lr){3-6} \cmidrule(lr){7-10}
                 &      &         It.      &  CPU     &  $\|x^k\|_1$   & $\|Ax^k-b\|$        &   It.     &  CPU   &    $\|x^k\|_1$  &  $\|Ax^k-b\|$           \cr
  \midrule
$100$       &     214.14     &    464    &    0.13    &      23.61      &   7.84e-8      &   342      &   0.06    &      23.61    &   8.98e-8    \\
$200$       &     444.51     &    714    &    0.17    &      49.88      &   6.84e-8      &   531      &   0.11    &      49.88    &   7.71e-8    \\
$300$       &     691.99     &   1065   &    0.29    &      83.25      &   2.47e-7      &   827      &   0.17    &      83.25    &   1.57e-7    \\
$400$       &     863.35     &    780    &    0.19    &     116.46     &   1.58e-7      &   596      &   0.13    &     116.46   &   1.56e-7    \\
$500$       &    1071.84    &    928    &    0.25    &     142.75     &   1.75e-7      &   703      &   0.16    &     142.75   &   1.69e-7    \\
$800$       &    1749.00    &    858    &    0.24    &     234.90     &   2.26e-7      &   647      &   0.16    &     234.90   &   2.37e-7    \\
$1000$     &    2213.51    &    948    &    0.27    &     281.97     &   3.58e-7      &   734      &   0.19    &     281.97   &   2.56e-7    \\
$2000$     &    4486.57    &   1022   &    0.92    &     507.92     &   1.16e-7      &   782      &   0.61    &     507.92   &   9.80e-8    \\
$3000$     &    6651.24    &   1103   &    2.97    &     768.48     &   2.97e-7      &   886      &   2.30    &     768.48   &   2.12e-7    \\
$5000$     &   11230.30   &   1092   &    9.62    &    1244.81    &   3.62e-7      &   869      &   7.62    &    1244.81  &   1.39e-7    \\
  \bottomrule
 \end{tabular}}
 \label{Ta1}
\end{table}

\begin{figure}[H]
\centering
\subfigure[$n=200$]{
\includegraphics[width=8.3cm]{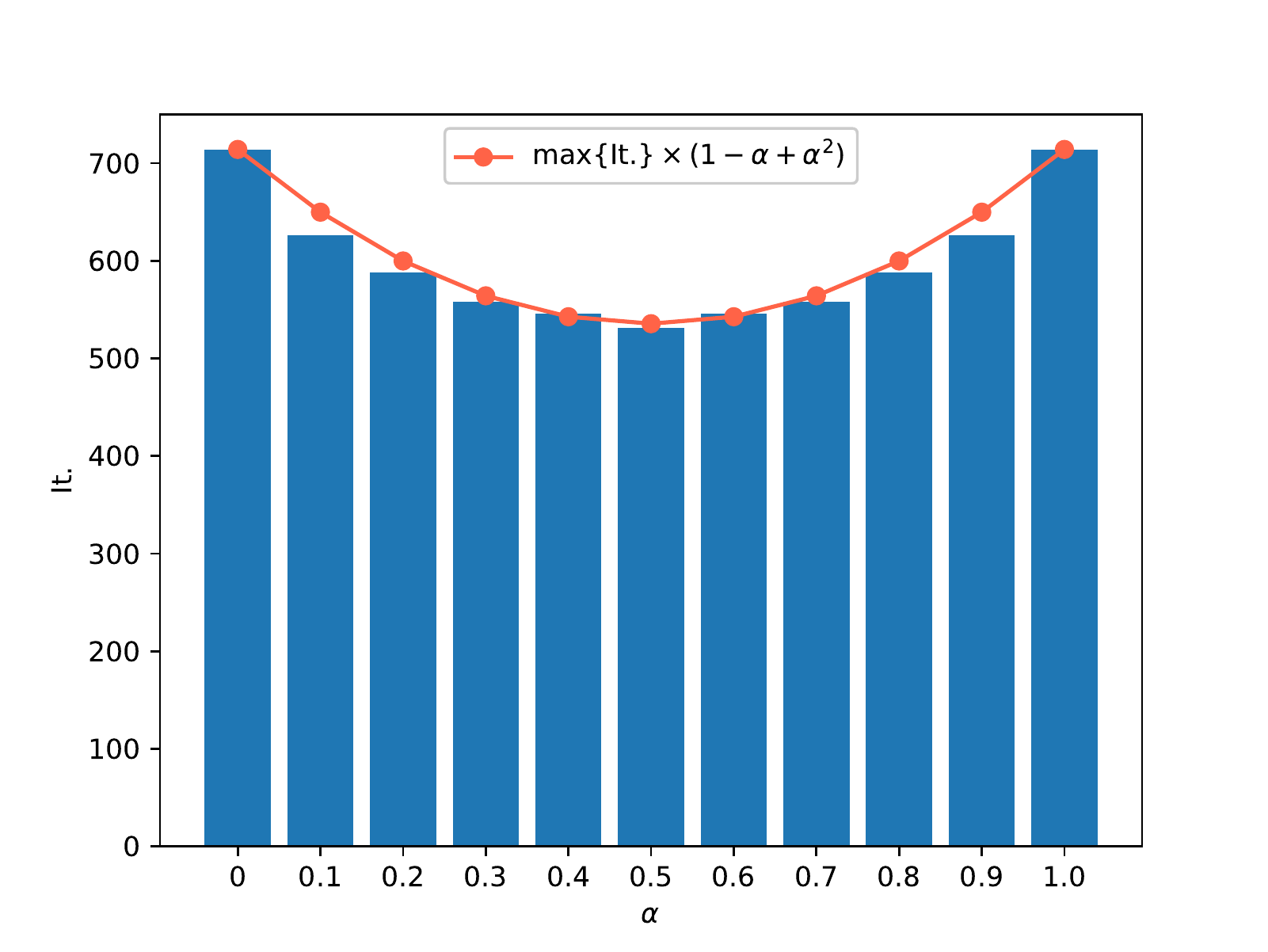}
}\hspace{-10mm}
\subfigure[$n=2000$]{
\includegraphics[width=8.3cm]{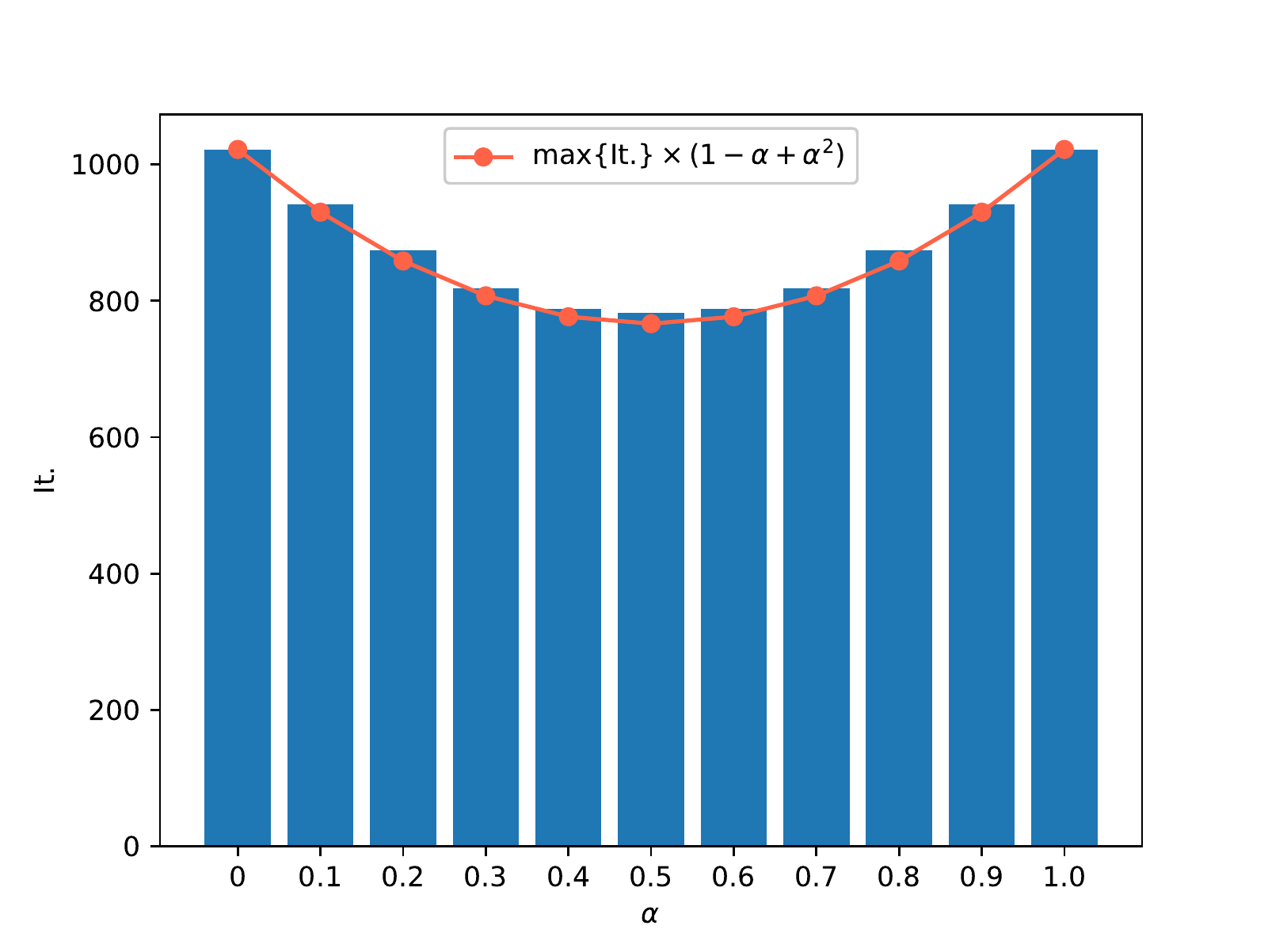}
}
\caption{Performance of CP-PPA \eqref{C-P} with different $\alpha$ in (\ref{Assum}) for \eqref{BP}. }
\label{fig0}
\end{figure}

\subsection{Potts-model-based image segmentation problem}

Then, we test the convex-relaxed version of the Potts model for multi-phase image segmentation problem
$$\min_{u_i(x)\geq0}\Big\{\sum_{i=1}^{m}\int_{\Omega}u_i(x)\rho(l_i,x)+\mu|\nabla u_i(x)|dx  \;\Big | \;\sum_{i=1}^mu_i(x)=1 \Big\},$$
where $\mu>0$ is the weight parameter for the regularization term of the total perimeter of all segmented regions, $\int_{\Omega}|\nabla u_i(x)|dx$ $(i=1,\ldots,m)$ are total variation (TV) regularization terms (see \cite{ROF1992}),  $\rho(l_i,x)$ $(i=1,\ldots,m)$ are used to evaluate the cost of assigning the label $l_i$ to the specified position $x$, and $\Omega$ is the image domain. As analyzed in \cite{yuan2010continuous},  its corresponding dual problem can be reformulated as
\begin{equation}\label{Potts}
  \begin{aligned}
& \max\int_{\Omega}p_s(x)dx\\
& \;\; \hbox{s.t. }\D q_i(x)-p_s(x)+p_i(x)=0,\; i=1,2,\ldots,m,\\
& \qquad\;  |q_i(x)|\leq \mu,\;p_i(x)\leq\rho(l_i,x),\; i=1,2,\ldots,m,
\end{aligned}
\end{equation}
where $\D=(-\nabla)^\ast$ is the adjoint of the $-\nabla$ operator, $p_s(x)$ and $p_i(x)$ $(i=1,\ldots,m)$ stand for the source flow and the sink flow, respectively. After discretization,  let $I_C(p(x),q(x))$ be the convex characteristic function on the convex set
$$C := \{(p(x);q(x))\;|\;p_i(x)\leq\rho(l_i,x),|q_i(x)|\leq \mu,\; i=1,2,\ldots,m\},$$
${\bf 1}$ be is the $n$-vector whose elements are all 1, and $I_n$ be the $n\times n$ identity matrix. Then, the model \eqref{Potts} can be rewritten as
\begin{subequations}\label{Potts-image}
\begin{equation}
\min_{p_s, p, q}\; \underbrace{- {\bf 1}^T p_s \, + \, I_C(p,q)}_{f(p_s, p,q)}
\end{equation}
subject to the following linear equality constraints
\begin{equation}
\underbrace{\left(\!\!\!
    \begin{array}{c}
      -I_n \\
      -I_n \\[-0.1cm]
      \vdots \\
      -I_n \\
    \end{array}
 \!\! \!\right)p_s+\left(\!\!\!
               \begin{array}{c}
                 \D \\
                 0 \\[-0.1cm]
                 \vdots \\
                 0 \\
               \end{array}
            \!\!\! \right)q_1+\!\cdots\!+\left(\!\!\!
                                            \begin{array}{c}
                                              0 \\
                                              0 \\[-0.1cm]
                                              \vdots \\
                                              \D  \\
                                            \end{array}
                                          \!\!\!\right)q_m+\left(\!\!\!
               \begin{array}{c}
                 I_n \\
                 0 \\[-0.1cm]
                 \vdots \\
                 0 \\
               \end{array}
            \!\!\! \right)p_1+\!\cdots\!+\left(\!\!\!
                                            \begin{array}{c}
                                              0 \\
                                              0 \\[-0.1cm]
                                              \vdots \\
                                              I_n  \\
                                            \end{array}
                                          \!\!\!\right)p_m}_{A(p_s;\;q_1;\ldots;\;q_m;\;p_1;\ldots;\;p_m)}=\underbrace{\left(\!\!\!
                                            \begin{array}{c}
                                              0 \\
                                              0 \\[-0.1cm]
                                              \vdots \\
                                              0  \\
                                            \end{array}
                                          \!\!\right)}_{b}.
\end{equation}
\end{subequations}
Hence, it is a special case of \eqref{Problem-LC} with the specific matrix $A$ defined as
$$A=\left(\!\!
    \begin{array}{ccccccccc}
      -I_n & \D &  0 &    \cdots            &      0   & I_n &  0 &    \cdots            &      0      \\
      -I_n        &     0       &         \D &   \cdots        &              0 &     0       &       I_n&   \cdots        &              0\\[-0.1cm]
      \vdots &       \vdots  &   \vdots     &        \ddots            &  \vdots &       \vdots  &   \vdots     &        \ddots            &  \vdots   \\
      -I_n        &     0       &    0     &     \ldots          &             \D  &     0       &    0     &     \ldots          &         I_n \\
    \end{array}\!\!
  \right).
$$
We thus have
$$AA^T=\left(\!\!
                                         \begin{array}{cccc}
                                           2I_n-\Delta & I_n            & \cdots & I_n \\
                                           I_n            & 2I_n-\Delta & \cdots & I_n \\[-0.1cm]
                                           \vdots    & \vdots   & \ddots & \vdots \\
                                           I_n & I_n & \cdots & 2I_n-\Delta \\
                                         \end{array}\!\!
                                       \right).
$$
As analyzed in \cite{Sun2021},  we have $\rho(A^T\!A)=\rho(AA^T)\leq9+m$ for the Potts-based image segmentation model \eqref{Potts-image}, and the CP-PPA \eqref{C-P} for \eqref{Potts-image} can be specified as
\begin{equation}\label{CP-Potts}
  \left\{
    \begin{array}{lll}
      q_i^{k+1}&=&\mathcal{P}_{\mu}(q_i^k-\frac{1}{r}\nabla u_i^k), \;i=1,\ldots,m,  \\[0.2cm]
      p_i^{k+1}&=&\mathcal{P}_{\rho_i}(p_i^k+\frac{1}{r}u_i^k), \;i=1,\ldots,m, \\[0.2cm]
      p_s^{k+1}&=&p_s^k+\frac{1}{r}(1-\sum_{i=1}^mu_i^k),\\[0.2cm]
      u_i^{k+1} &=&u_i^k-\frac{1}{s} \big\{\D (2q_i^{k+1}-q_i^k)-(2p_s^{k+1}-p_s^k)+(2p_i^{k+1}-p_i^k)\big\}, \; i=1,\ldots,m,
    \end{array}
  \right.
\end{equation}
where the projections $\mathcal{P}_{\mu}$ and $\mathcal{P}_{\rho_i}\;(i=1,\ldots,m)$ are respectively defined as follows:
$$\mathcal{P}_{\mu}(q)=q/\max\Big\{1,\frac{|q|}{\mu}\Big\} \;\; \hbox{and} \;\; \mathcal{P}_{\rho_i}(p)=\min\Big\{p,\rho(l_i,x)\Big\} \;\; \hbox{for} \;\; i=1,\ldots,m.$$

For succinctness, we only consider the case of $m=4$ for the Potts-based image segmentation model  \eqref{Potts-image}. Note that $\rho(A^T\!A)\leq9+m=13$ in \eqref{Potts-image}. We simply follow \cite{Sun2021}, and choose $r\cdot s=13$ to satisfy the condition (\ref{Assum-CP}) (which is numerically reasonable because the condition (\ref{Assum-CP}) only requires that $r\cdot s$ be arbitrarily larger than $\rho(A^TA)$). Accordingly, the optimal condition (\ref{Assum-0.75}) becomes $r\cdot s=0.75\times13$.  Again, for comparison purpose, we use the same mechanism to choose $r$ and $s$ for implementing \eqref{CP-Potts}, and they are chosen in the following ways subject to the only difference of the constant $0.75$:
\begin{itemize}
  \item CP-PPA with (\ref{Assum-CP}): $r=13/s$  and  $s=3,4,5,6,7$.
  \item CP-PPA with (\ref{Assum-0.75}): $r=0.75\times13/s$ and $s=3,4,5,6,7$.
\end{itemize}

We follow the package developed by the authors of \cite{yuan2010continuous} (which is available at \url{https://www.mathworks.com/matlabcentral/fileexchange/34224-fast-continuous-max-flow-algorithm-to-2d-3d-multi-region-image-segmentation}) and take the stopping criterion for \eqref{CP-Potts} as
$$\hbox{ADE}(k):=\frac{\|u^k-u^{k-1}\|}{\hbox{size}(u)}<10^{-7},$$
where ``ADE" denotes the average dual error. For experiments, we test the image ``flowers" with size $539\times359$ which can be downloaded from \url{https://github.com/taigw/GrabCut-GraphCut}, and the image ``butterfly" with size $768\times512$ can be downloaded from \url{https://homepages.cae.wisc.edu/~ece533/images/}.

\begin{table}[H]
\caption{Numerical results of CP-PPA \eqref{C-P} for \eqref{Potts-image} with $m=4$.}
 \centering
 \begin{tabular}{l ccccccc ccc}
  \toprule
 \multirow{2}{*}{Parameter $s$} &   \multicolumn{3}{c}{CP-PPA with (\ref{Assum-CP})} &   \multicolumn{3}{c}{CP-PPA with (\ref{Assum-0.75})}& \cr \cmidrule(lr){2-4} \cmidrule(lr){5-7}
                                &   It.     &   CPU     &   ADE     &   It.   &    CPU      &   ADE      \cr
  \midrule
   flowers       \\\cline{1-1}\\[-0.3cm]
   $s=3$      &  426       &    78.74       &  9.99e-8      &    335       &     62.19         &   9.99e-8   \\
   $s=4$      &  422       &    78.54       &  9.97e-8      &    336       &     61.68         &   9.93e-8   \\
   $s=5$      &  419       &    78.30       &  9.99e-8      &    332       &     61.78         &   9.94e-8   \\
   $s=6$      &  412       &    76.47       &  9.91e-8      &    329       &     60.30         &   9.89e-8   \\
   $s=7$      &  408       &    75.45       &  9.86e-8      &    318       &     58.39         &   9.89e-8   \\
   \midrule
    butterfly       \\\cline{1-1}\\[-0.3cm]
   $s=3$      &  572       &    194.13       &  9.99e-8      &    480       &     162.97        &   9.91e-8   \\
   $s=4$      &  529       &    181.25       &  9.91e-8      &    450       &     154.28        &   9.87e-8   \\
   $s=5$      &  503       &    172.59       &  9.96e-8      &    433       &     147.86        &   9.85e-8   \\
   $s=6$      &  488       &    167.92       &  9.99e-8      &    422       &     143.75        &   9.96e-8   \\
   $s=7$      &  482       &    164.87       &  9.95e-8      &    423       &     144.63        &   9.99e-8   \\
  \bottomrule
 \end{tabular}
 \label{Table2}
\end{table}

Some numerical results of the CP-PPA \eqref{C-P} for solving the Potts-based image segmentation model \eqref{Potts-image} are reported in Table \ref{Table2}. The moderate acceleration of the optimal condition (\ref{Assum-0.75}) over the original condition (\ref{Assum-CP}) is shown again for the CP-PPA (\ref{C-P}). In Figure \ref{fig1}, some tested images and the computationally segmented results are visualized.

\begin{figure}[H]
\centering
\subfigure[Original image: flowers]{
\includegraphics[width=7.5cm]{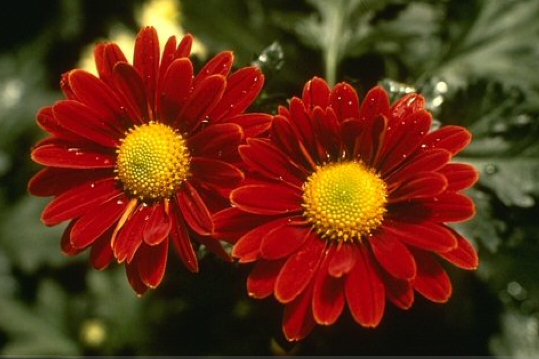}
}\hspace{5mm}
\subfigure[Segmented result of flowers]{
\includegraphics[width=7.5cm]{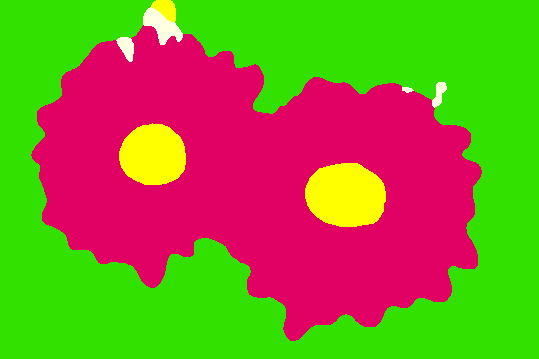}
}\\[0.2cm]
\subfigure[Original image: butterfly]{
\includegraphics[width=7.5cm]{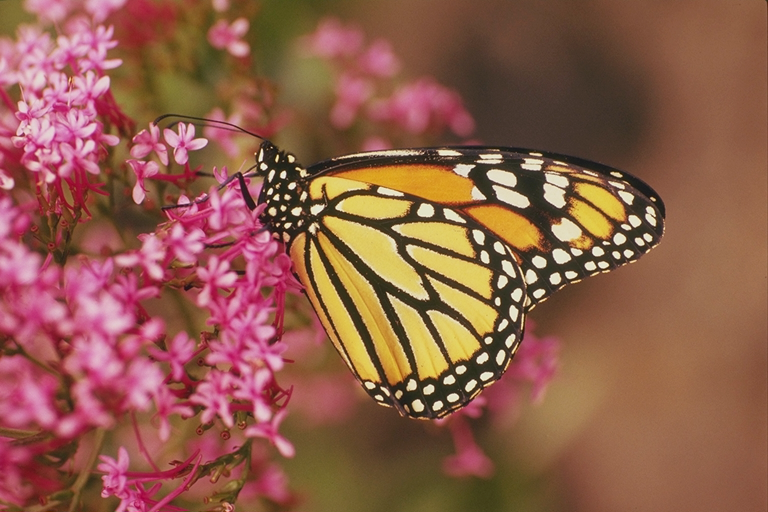}
}\hspace{5mm}
\subfigure[Segmented result of butterfly]{
\includegraphics[width=7.5cm]{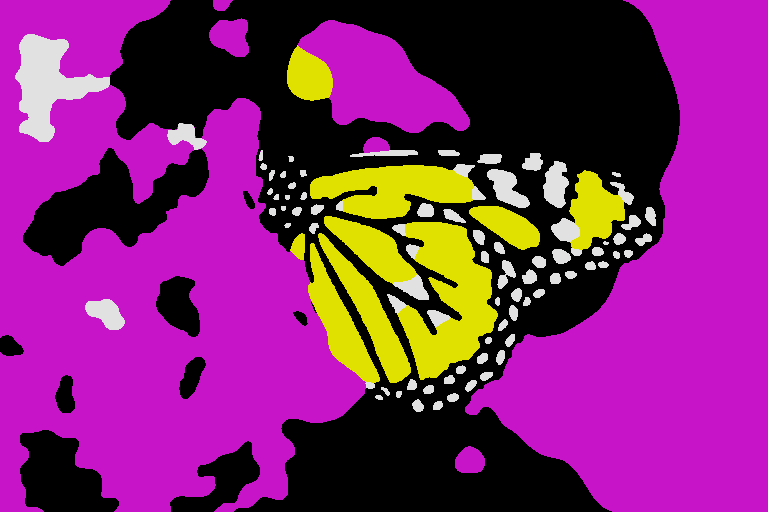}
}
\caption{Clean and computationally segmented images by CP-PPA (\ref{C-P}) with (\ref{Assum-0.75}) for \eqref{Potts-image} with four labels; $s=7$ for ``flowers" and  $s=6$ for ``butterfly".}
\label{fig1}
\end{figure}

\section{Heuristic for improving (\ref{Assum-CP})}\label{sec-exp}

\setcounter{equation}{0}
\setcounter{remark}{0}

The convergence-guaranteeing condition (\ref{Assum}) is less restrictive than the original condition (\ref{Assum-CP}), and its optimal choice (\ref{Assum-0.75}) with $\alpha=0.5$ is verified to be effective for accelerating the CP-PPA (\ref{C-P}) moderately. This improvement is effective for the generic case where the corresponding $A^T\!A$ is not assumed to have any specific structure. As mentioned, if $\rho(A^T\!A)$ is too large, then the condition (\ref{Assum-CP}) restricts the choices of $r\cdot s$ severely and tiny step sizes become unavoidable. For this case, the condition \eqref{Assum-CP} becomes the bottleneck to the efficiency of the CP-PPA \eqref{C-P}, and the improved condition (\ref{Assum}) or (\ref{Assum-0.75}) does not help much, either.

\subsection{Motivation}

When $\rho(A^T\!A)$ is too large, it is necessary to consider how to further relax these conditions and thus to avoid too small step sizes, even though rigorous convergence may not be guaranteed. Note that the essential role of the conditions (\ref{Assum-CP}), (\ref{Assum}) and (\ref{Assum-0.75}) in the convergence proof of the CP-PPA \eqref{C-P} is to sufficiently hence most conservatively guarantee the inequalities
\begin{equation}\label{Assum1}
  r\cdot s \|x^k - {x}^{k+1} \|^2 > \|A(x^k - {x}^{k+1})\|^2
\end{equation}
for all iterations uniformly. In this sense, these conditions provide iteration-independent and hence globally applicable bounds for the choice of $r\cdot s$. Indeed, all these conditions are for the generic setting of (\ref{Min-Max}) and no structure of $A^TA$ is pre-assumed. Our essential idea is to consider the less restrictive condition (\ref{Assum1}) directly, and explore the structure of $A$ to probe smaller lower bound of $r\cdot s$ iteratively in lieu with the iteration-dependent condition (\ref{Assum1}) directly. If the average eigenvalue of $A^T\!A$ (denoted by $\rho_{\hbox{\footnotesize average}}(A^T\!A)$) is significantly smaller than the largest one, then we are inspired to determine the range of $r\cdot s$ heuristically in terms of $\rho_{\hbox{\footnotesize average}}(A^T\!A)$, instead of $\rho(A^TA)$, and we even allow for certain flexibility in the requirement of satisfying the condition (\ref{Assum1}) strictly. Though there is no mathematical rigor, the condition (\ref{Assum1}) may be satisfied for some iterations. Once it is satisfied, tiny step sizes are avoided and hence much computation can be saved. This is how the niche targeting heuristic works for the CP-PPA (\ref{C-P}).

\subsection{Assignment problem}\label{sec-assignment}

Recall that the classic assignment problem in operational research aims at assigning $n$ jobs to $n$ persons with each job being exactly assigned to one person, and its model is
  \[  \label{Assignment}
   \begin{array}{rl}   \max & \Phi(x):=\sum_{i=1}^n  \sum_{j=1}^n   c_{ij} x_{ij}   \\
                                    \hbox{s.t}  &   \sum_{j=1}^n   x_{ij}  = 1, \quad i=1, \ldots, n,\\
                                              &   \sum_{i=1}^n   x_{ij}  = 1,  \quad j=1, \ldots, n,  \\
                                            &     x_{ij} \in \{ 0, 1\},
                                    \end{array}
                                                                        \]
where $c_{ij}>0$. It is well known (e.g. Section 6.5 in \cite{LYe}) that the assignment problem (\ref{Assignment}) can be relaxed as the following one in which the binary constraints are replaced by box constraints:
  \[  \label{Assignment-R}
   \begin{array}{rl}   \min &-\Phi(x):=\sum_{i=1}^n  \sum_{j=1}^n   -c_{ij} x_{ij}   \\
                                    \hbox{s.t}  &   \sum_{j=1}^n   x_{ij}  = 1,  \quad i=1, \ldots, n,  \\
                                              &   \sum_{i=1}^n   x_{ij}  = 1,   \quad j=1, \ldots, n, \\
                                              &   0 \leq x_{ij} \leq 1.
                                    \end{array}
                                    \]
The model (\ref{Assignment-R}) is obviously a special case of the model \eqref{Problem-LC} with a linear objective function, linear equality constraints, and box constraints. Now we focus on finding an efficient heuristic to further accelerate the CP-PPA \eqref{C-P} by exploiting the special structure of the specific problem (\ref{Assignment-R}).

Let us define
   $$    x^T= (x_{11}, x_{12}, \ldots, x_{1n}, x_{21}, x_{22}, \ldots, x_{2n}, \cdots\cdots , x_{n1}, x_{n2}, \ldots, x_{nn}).
       $$
Then, for the model (\ref{Assignment-R}), the matrix $A$ corresponding to (\ref{Problem-LC}) has the form
\[  \label{Matrix-A}
    A=\left( \begin{array}{ccccc}
                   \quad e^T  \quad   &            &                             &               &      \\[-0.2cm]
                               &   \quad e^T  \quad   &                             &                &       \\[-0.2cm]
                               &            &  \quad  \cdots \cdots  \quad   &                &       \\[-0.2cm]
                              &            &                              &   \quad  e^T  \quad     &      \\[-0.2cm]
                                &          &                            &                    &     \quad e^T   \quad       \\
             \hbox{\large $ I_n$}  &  \hbox{\large $ I_n$}  &      \cdots   \cdots   & \hbox{\large $ I_n$}   &  \hbox{\large $ I_n$}
              \end{array} \right),
             \]
where $e$ is the $n$-vector whose elements are all 1, and $I_n$ is the $n\times n$ identity matrix. Notice that the $2n \times n^2$ matrix $A$ defined in (\ref{Matrix-A}) is totally unimodular, and as analyzed in, e.g., \cite{Sch1998}, this property essentially ensures the equivalence between the assignment problem (\ref{Assignment}) and its relaxed one (\ref{Assignment-R}).

Now, we explain how to calculate the largest and average eigenvalues of the matrix $A^T\!A$. First, it follows from basic linear algebra knowledge that $ \rho(A^T\!A) =  \rho(AA^T)$ and $ \hbox{Trace}(A^T\!A)    = \hbox{Trace}(AA^T)$. Using the structure of the matrix $A$ in (\ref{Matrix-A}), we have
\[\label{AAT}
 AA^T  =
    \left(\begin{array}{cccccccccc}
                 n     & 0        & \ldots   &   \ldots  &   0           &          1   &  1   & \ldots &\ldots   & 1       \\[-0.1cm]
                 0     &  n       & \ddots  &                &  \vdots   &          1   &  1   & \ldots & \ldots   & 1      \\[-0.1cm]
           \vdots &\ddots &\ddots   &  \ddots  & \vdots    &  \vdots  &\vdots  &  &   & \vdots            \\[-0.1cm]
           \vdots &             &\ddots   &  \ddots  &        0      &   1 &  1   & \ldots & \ldots   & 1               \\[-0.1cm]
                0    &  \ldots &\ldots    &       0       &        n      &   1 &  1   & \ldots & \ldots   & 1               \\[-0.1cm]
                  1   &  1   & \ldots &\ldots   & 1    &     n     & 0        & \ldots   &   \ldots  &   0        \\[-0.1cm]
                            1   &  1   & \ldots & \ldots   & 1   & 0     &  n       & \ddots  &                &  \vdots           \\[-0.1cm]
                   \vdots  &\vdots  &  &   & \vdots   &   \vdots &\ddots &\ddots   &  \ddots  & \vdots           \\[-0.1cm]
                     1 &  1   & \ldots & \ldots   & 1   &  \vdots &             &\ddots   &  \ddots  &        0                \\[-0.1cm]
                      1 &  1   & \ldots & \ldots   & 1     &    0    &  \ldots &\ldots    &       0       &        n
                  \end{array} \right)            =
                      \left(\begin{array}{cc}
                        nI_n     & ee^T  \\
                        ee^T      &    nI_n  \end{array} \right) .
        \]
According to the assertion of the Gerschgorin-circle for the eigenvalues of a matrix, for every eigenvalue $\rho_i$ of the positive semi-definite matrix $AA^T$, we have
        $$     | \rho_i- n |  \le  n   \qquad \hbox{and thus} \qquad    0\le \rho_i \le 2n.  $$
In addition,  we have
$$     AA^T   \left(\begin{array}{c}
                           e  \\
                          e \end{array} \right) =\left(\begin{array}{cc}
                        nI_n     & ee^T  \\
                        ee^T      &    nI_n  \end{array} \right)  \left(\begin{array}{c}
                           e  \\
                          e \end{array} \right)  = 2n \left(\begin{array}{c}
                           e  \\
                          e \end{array} \right). $$
Therefore, it holds that
  $$    \rho(A^T\!A) = \rho(AA^T)   = 2n .  $$
On the other hand, it follows from \eqref{AAT} that
$$  \hbox{Trace}(A^T\!A)= \hbox{Trace}(AA^T)  = 2n^2, $$
and thus $\rho_{\hbox{\footnotesize average}}(A^T\!A) = 2$. That is, for the matrix $A$ defined in (\ref{Matrix-A}) $\rho(A^T\!A)=2n$ while $\rho_{\hbox{\footnotesize average}}(A^T\!A) \equiv 2$. When $n$ is large, the condition (\ref{Assum-CP}), as well as its improved ones (\ref{Assum}) and (\ref{Assum-0.75}), are all too conservative to yield favorable step sizes, because of $\rho(A^T\!A)=2n$. To avoid tiny step sizes for this case, the significant difference between $\rho(A^T\!A)$ and $\rho_{\hbox{\footnotesize average}}(A^T\!A)$ naturally inspires us to consider replacing these conditions heuristically by
\begin{equation}\label{Assum-Heur}
  r \cdot s = 2\rho_{\hbox{\footnotesize average}}(A^T\!A),
\end{equation}
to implement the CP-PPA (\ref{C-P}) for the model (\ref{Assignment-R}).

To simulate, we generate some data for the model \eqref{Assignment-R} with different values of $n$. We take $ c_{ij} = \hbox{random} \times  10$ for $ i=1, \ldots, n,  j=1, \ldots, n$;
    $x_{ij}^0 = \frac{1}{n}$ for $i=1,\ldots, n, \; j=1, \ldots, n$; and $y^0= \textbf{0}$. The stopping criterion is
$$\max\big\{\|x^k-x^{k-1}\|_\infty,\quad \|y^k-y^{k-1}\|_\infty\big\}<10^{-10}.$$
We still include the improved condition (\ref{Assum-0.75}) for comparison, although our main purpose is to verify the effectiveness of the heuristic condition (\ref{Assum-Heur}). More specifically, we compare the following specific choices of $r$ and $s$ to implement the CP-PPA \eqref{C-P}.
\begin{itemize}
\item  CP-PPA with \eqref{Assum-CP}: $r=(10/n) \times \sqrt{n/2}$ and $s = 0.4 n \times  \sqrt{n/2}$.
\item  CP-PPA with \eqref{Assum-0.75}: $r=\sqrt{0.75} \times (10/n) \times \sqrt{n/2}$ and $s=  \sqrt{0.75} \times 0.4 n \times  \sqrt{n/2}$.
\item  CP-PPA with \eqref{Assum-Heur}: $r=10/n$ and $s= 4/r= 0.4 n$.
 \end{itemize}

We report some numerical results for the CP-PPA \eqref{C-P} with different conditions in Table \ref{Table3}. It can be seen that the structure-exploiting heuristic (\ref{Assum-Heur}) accelerates the CP-PPA \eqref{C-P} significantly. Indeed, the acceleration of (\ref{Assum-Heur}) over the conditions (\ref{Assum-CP}) and (\ref{Assum-0.75}) can be as much as more than 20 times. Hence, the possible loss of mathematical rigor in (\ref{Assum-Heur}) leads to a significant acceleration empirically for the CP-PPA (\ref{C-P}). In addition, for all the cases tested, it is empirically verified that solutions of the relaxed problem \eqref{Assignment-R} are all binary. Hence, the CP-PPA \eqref{C-P} with the heuristical condition (\ref{Assum-Heur}) provides a very efficient solver to the challenging assignment problem \eqref{Assignment}. We visualize the solutions for some tested scenarios in Figure \ref{fig2} below.

\begin{figure}[H]
\centering
\subfigure[$n=20$]{
\includegraphics[width=5.3cm]{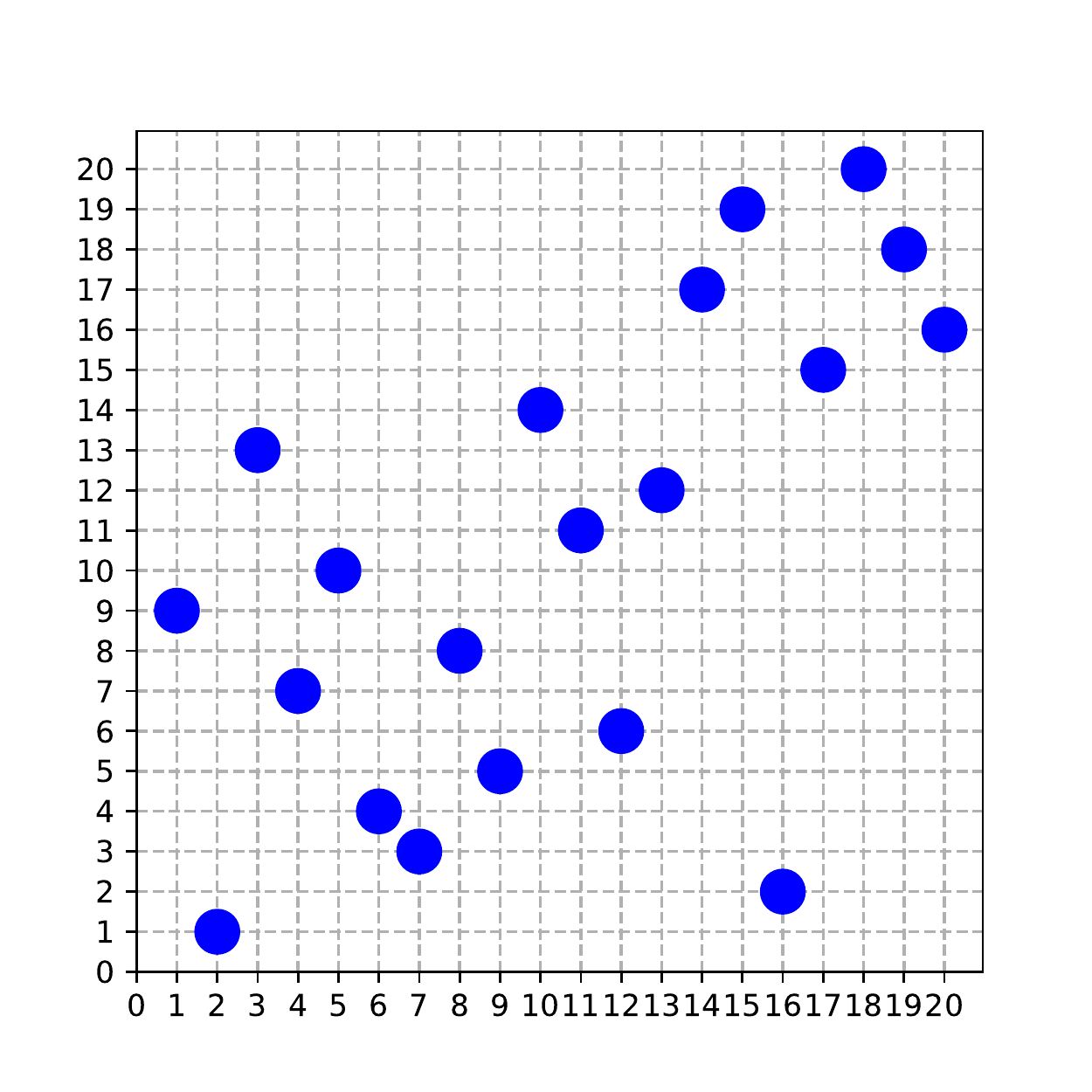}
}\hspace{-8mm}
\subfigure[$n=50$]{
\includegraphics[width=5.3cm]{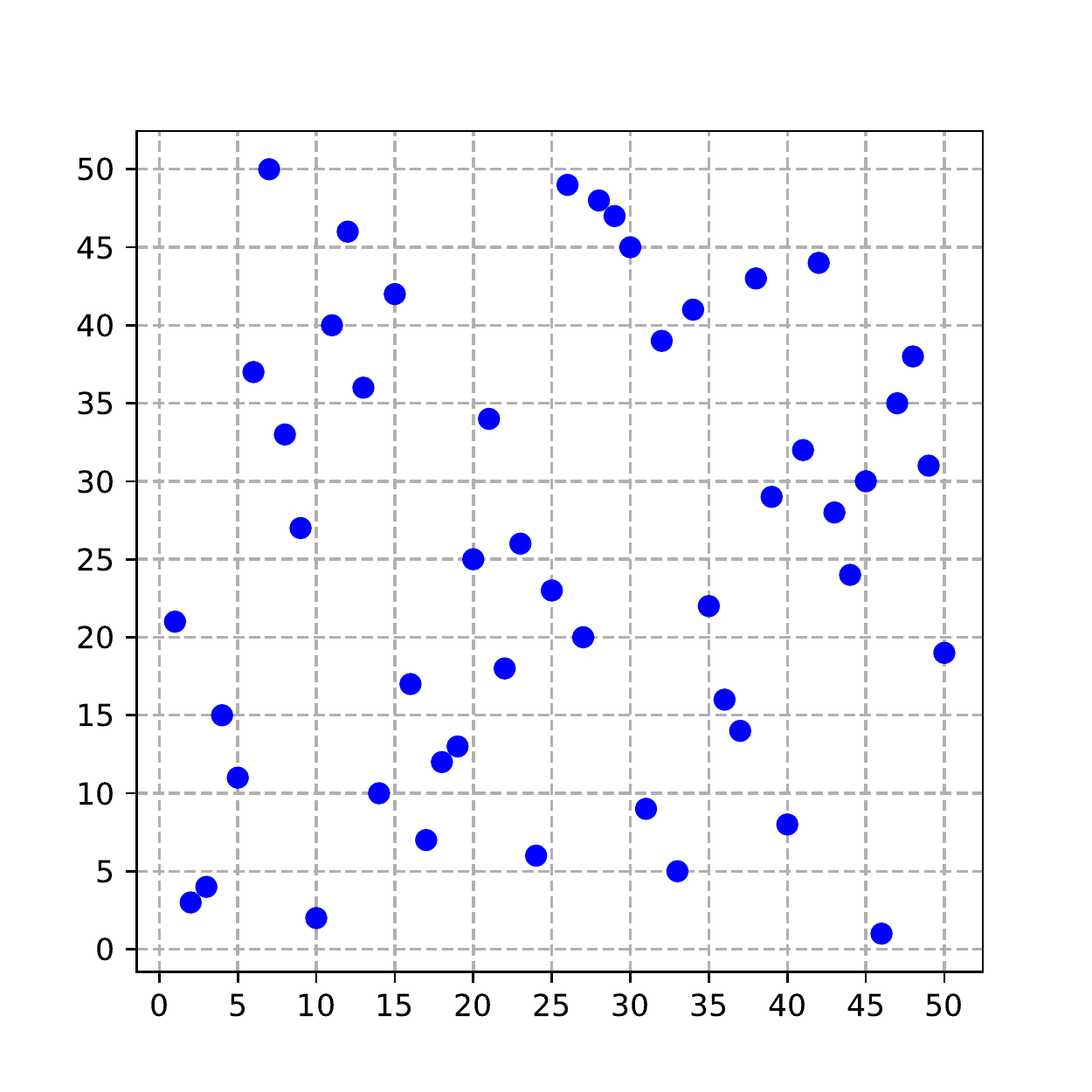}
}\hspace{-8mm}
\subfigure[$n=100$]{
\includegraphics[width=5.3cm]{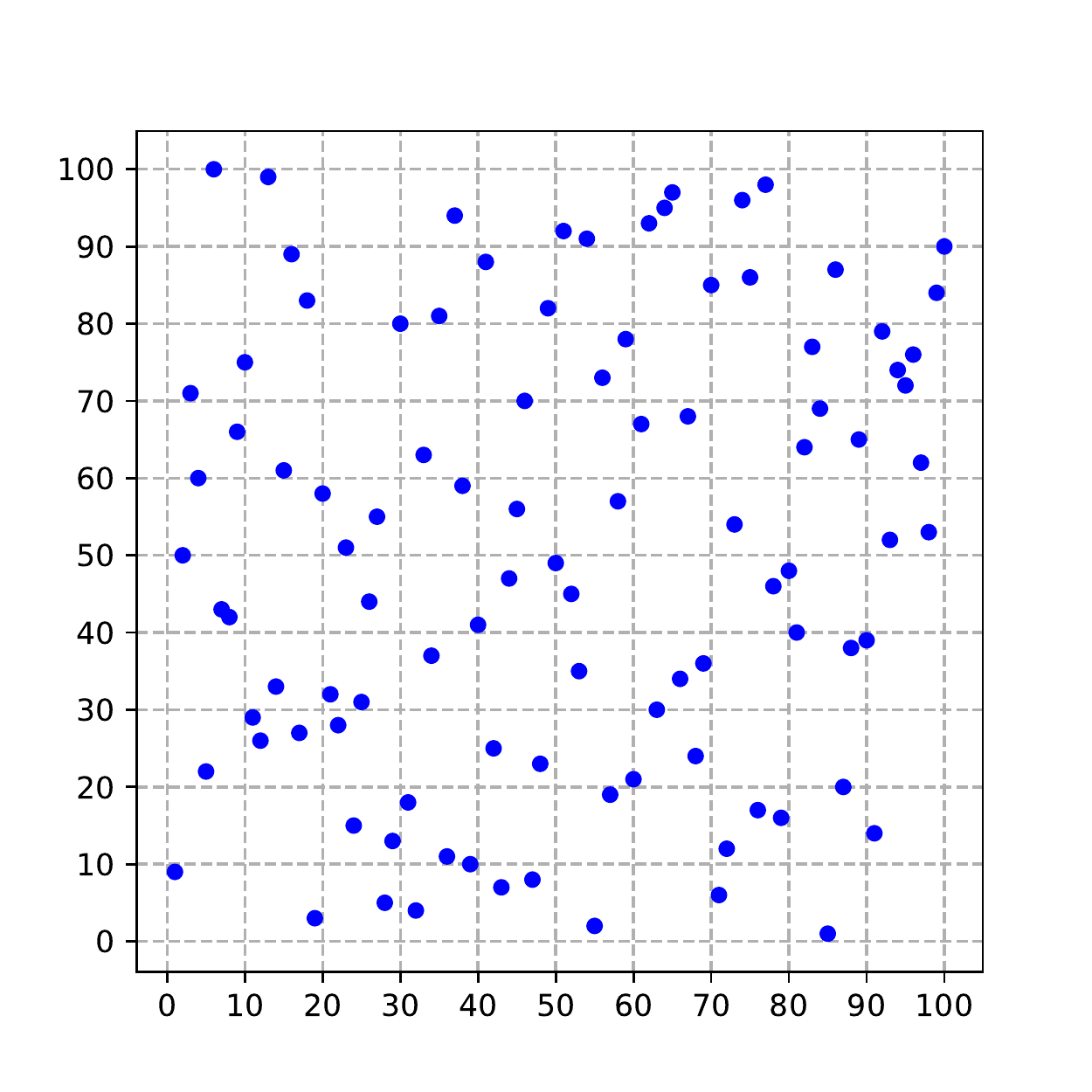}
}\\[0.0cm]
\subfigure[$n=500$]{
\includegraphics[width=8.2cm]{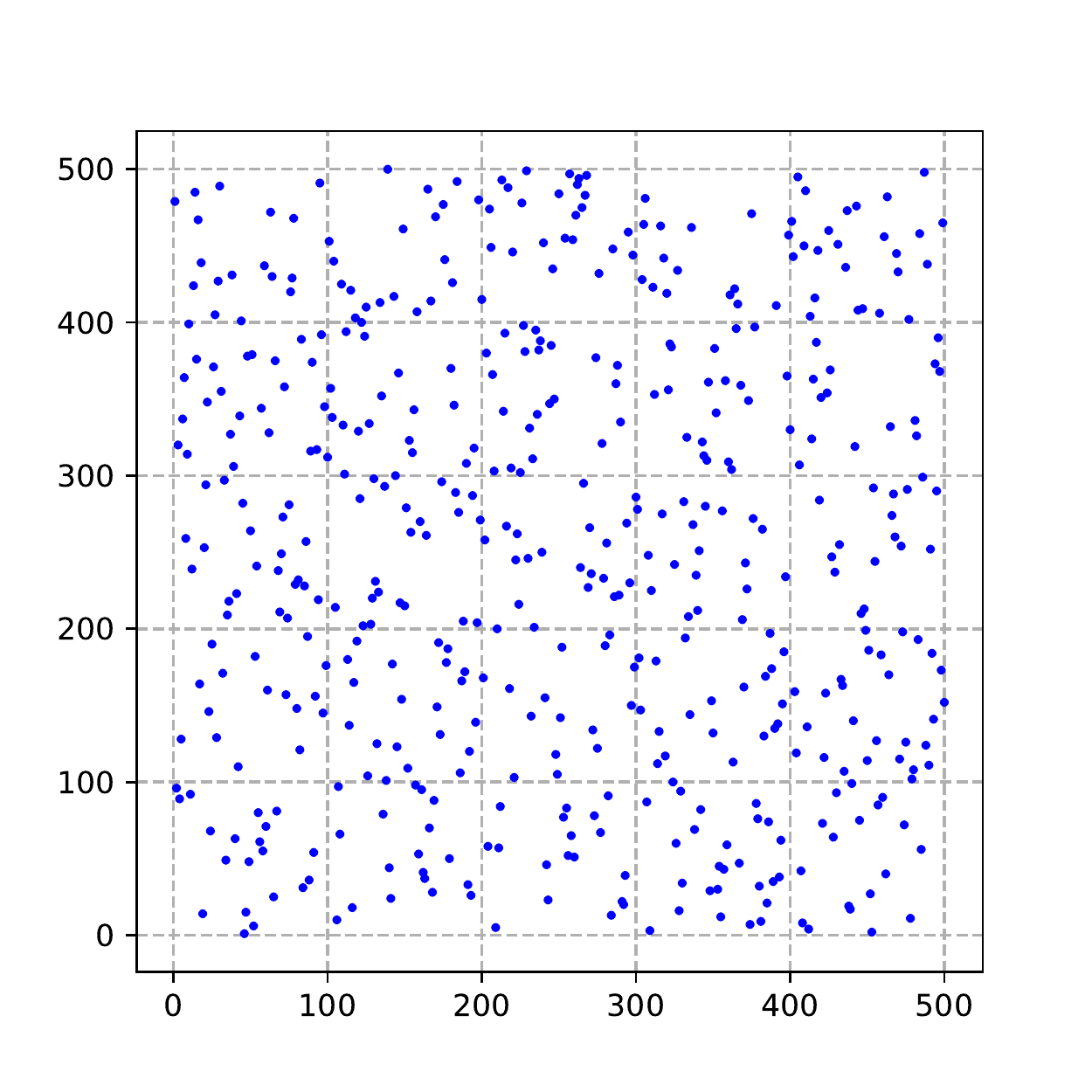}
}\hspace{-10mm}
\subfigure[$n=1000$]{
\includegraphics[width=8.2cm]{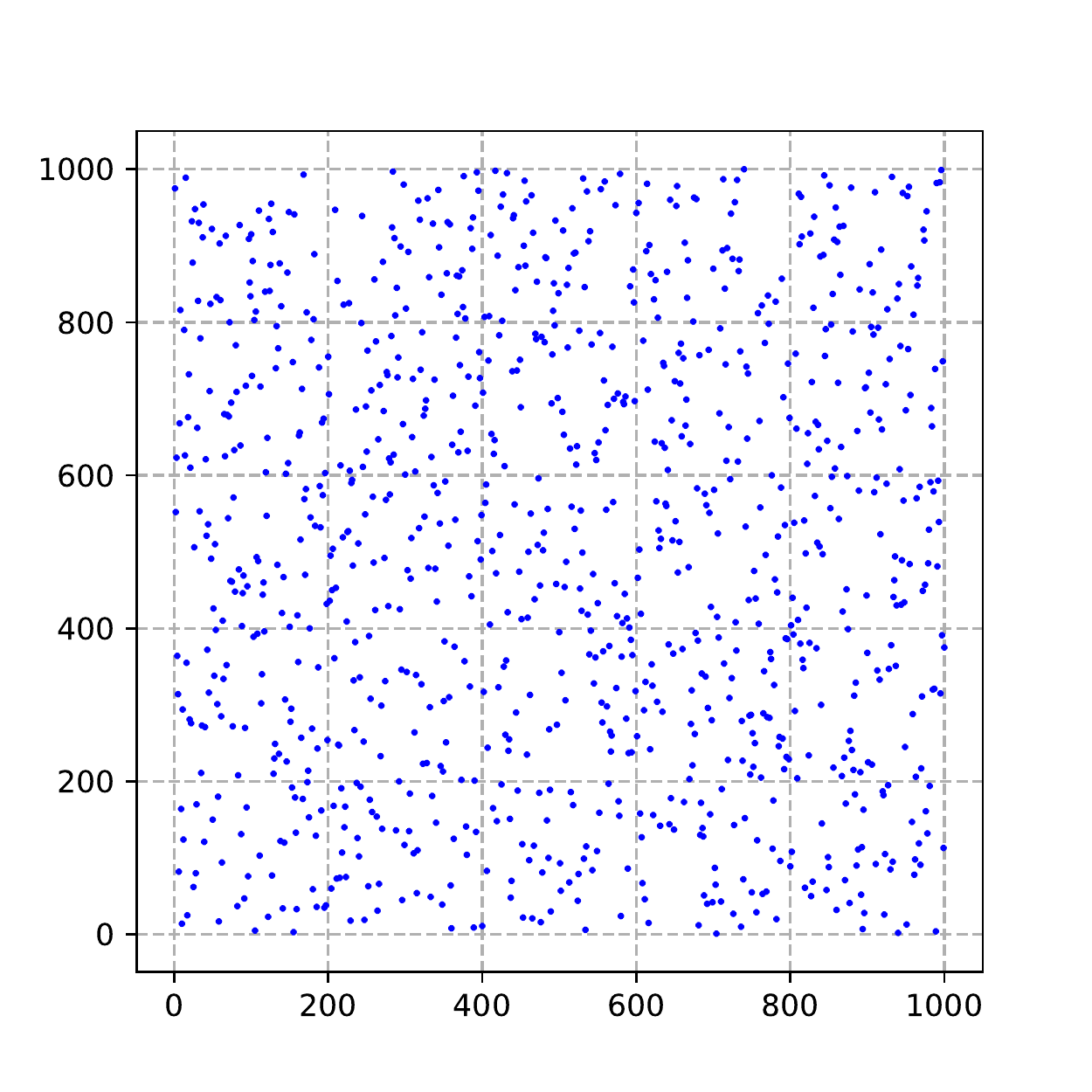}
}
\caption{Numerical results for (\ref{Assignment-R}) solved by CP-PPA \eqref{C-P} with the heuristic condition \eqref{Assum-Heur}.}
\label{fig2}
\end{figure}

\begin{table}[H]
\caption{Numerical results of  the CP-PPA \eqref{C-P} for \eqref{Assignment-R}. }
 \centering
 \setlength{\tabcolsep}{1.6mm}{
 \begin{tabular}{c ccccccc ccc}
  \toprule
 \multirow{2}{*}{Size $n$} &   \multicolumn{3}{c}{CP-PPA with \eqref{Assum-CP}} &   \multicolumn{3}{c}{CP-PPA with \eqref{Assum-0.75}} &   \multicolumn{3}{c}{CP-PPA with \eqref{Assum-Heur}} & \cr \cmidrule(lr){2-4} \cmidrule(lr){5-7} \cmidrule(lr){8-10}
                                &      It.          &   CPU    &   $\Phi(x^k)$      &     It.           &    CPU   &   $\Phi(x^k)$     &   It.       &   CPU &    $\Phi(x^k)$    \cr
  \midrule
                $20$        &    229         &    0.06        &   186.64      &    201         &    0.03        &   186.64     &   74       &    0.01    &    186.64     \\
                $50$        &    317         &    0.10        &   486.51      &    275         &    0.08        &   486.51     &   57       &    0.01    &    486.51     \\
                $80$        &    438         &    0.12        &   782.31      &    381         &    0.10        &   782.31     &   69       &    0.02    &    782.31     \\
                $100$      &    1053       &    0.35        &   983.74      &    910         &    0.28        &   983.74     &   144     &    0.05    &    983.74     \\
                $200$      &    2139       &    1.53        &   1983.47    &    1841       &    1.27        &   1983.47   &   200     &    0.14    &    1983.47     \\
                $300$      &    2407       &    3.82        &   2983.28    &    2081       &    3.20        &   2983.28   &   217     &    0.33    &    2983.28     \\
                $400$      &    10330     &    76.77      &   3982.58    &    8886       &    64.82      &   3982.58   &   699     &    5.29    &    3982.58     \\
                $500$      &    4739       &    53.62      &   4983.45    &    4070       &    45.76      &   4983.45   &   296     &    3.43    &    4983.45     \\
                $600$      &    9321       &    151.68    &   5983.80    &    8049       &    130.17    &   5983.80   &   505     &    8.14    &    5983.80     \\
                $700$      &    18774     &    413.38    &   6984.00    &    16280     &    358.55    &   6984.00   &   1041   &    23.03  &    6984.00     \\
                $800$      &    20660     &    597.20    &   7983.59    &    17816     &    517.03    &   7983.59   &   1034   &    30.16  &    7983.59     \\
                $900$      &    9500       &    349.11    &   8983.77    &    8240       &    300.18    &   8983.77   &   443     &    16.52  &    8983.77     \\
                $1000$    &    16699     &    759.72    &   9984.04    &    14466     &    659.46    &   9984.04   &   704     &    31.85  &    9984.04     \\
  \bottomrule
 \end{tabular}}
 \label{Table3}
\end{table}

\section{Conclusions}\label{Sec-conclu}

\setcounter{equation}{0}
\setcounter{remark}{0}

In this paper, we generalize the well-known primal-dual algorithm proposed by Chambolle and Pock for saddle point problems, and improve the condition for ensuring its convergence. The improved condition is effective for the most generic setting of convex programming problems with linear equality constraints, and it is shown to be optimal. We particularly recommend the choice of $\alpha=1/2$ to implement the generalized primal-dual algorithm (\ref{H-Y}) for various applications of the saddle point problem (\ref{Min-Max}), because of the resulting optimal convergence-guaranteeing condition (\ref{Assum-0.75}). It is also verified by some standard applications that the improved condition can easily lead to numerical acceleration. We would reiterate that the acceleration is generally moderate because it is resulted by a theoretically optimal condition and it is effective for the generic setting. But it is extremely easy to realize the acceleration;  essentially there is no need to tune any parameter additionally and basically we just need to adapt the available codes by attaching the constant 0.75 to the current way of determining the parameters $r$ and $s$. Hence, the improved condition immediately provides a simple and universal way to further improve the numerical performance of the original primal-dual algorithm proposed by Chambolle and Pock.
We also propose a structure-exploiting  heuristic to further accelerate the original primal-dual algorithm empirically for some specific saddle point problems, and verify its significant acceleration by the classic assignment problem.

\end{CJK*}
\end{document}